\documentclass[11pt]{amsart}
\usepackage{amsthm,amsmath,amssymb}

\usepackage{geometry}
 \geometry{
 a4paper,
 total={170mm,240mm},
 left=20mm,
 top=20mm,
 }
\usepackage{graphicx,cite,enumitem}
\usepackage{color}

\numberwithin{equation}{section}
\numberwithin{figure}{section}
\theoremstyle{plain}
\newtheorem{thm}{Theorem}[section]
\newtheorem{lem}[thm]{Lemma}
\newtheorem{cor}[thm]{Corollary}

\newtheorem{clm}[thm]{Claim}

\theoremstyle{remark}
\newtheorem{rmk}[thm]{Remark}

\newcommand{\M}{\operatorname{M}}

\newcommand{\PP}{\operatorname{PP}}

\begin{document}

\title{A shuffling theorem for lozenge tilings of doubly-dented hexagons}

\author{Tri Lai}
\address{Department of Mathematics, University of Nebraska -- Lincoln, Lincoln, NE 68588, U.S.A.}
\email{tlai3@unl.edu}
\thanks{This research was was supported in part  by Simons Foundation Collaboration Grant (\# 585923).}

\author{Ranjan Rohatgi}
\address{Department of Mathematics and Computer Science, Saint Mary's College, Notre Dame, IN 46556, U.S.A.}
\email{rrohatgi@saintmarys.edu}

\subjclass[2010]{05A15,  05B45}

\keywords{perfect matchings, plane partitions, lozenge tilings, dual graph,  graphical condensation.}

\date{\today}

\dedicatory{}

\begin{abstract}
 MacMahon's theorem on plane partitions yields a simple product formula for tiling number of a hexagon, and Cohn, Larsen and Propp's theorem provides an explicit enumeration for tilings of a dented semihexagon via semi-strict Gelfand--Tsetlin patterns. In this paper, we prove a natural hybrid of the two theorems for hexagons with an arbitrary set of  unit triangles removed along the a horizontal axis. In particular, we show that the `shuffling' of removed unit triangles only changes the tiling number of the region by a simple multiplicative factor. Our main result generalizes a number of known enumerations and asymptotic enumerations of tilings. We also reveal connections of the main result to the study of symmetric functions and $q$-series.
\end{abstract}

\maketitle
\section{Introduction}

MacMahon's classical theorem  \cite{Mac} on plane partition fitting in a given box is equivalent to the fact that the number of lozenge tilings of a centrally symmetric hexagon $H(a,b,c)$ of side-lengths $a,b,c,a,b,c$ (in this cyclic order) is given by the simple product:
\begin{equation}\label{Maceq}
\PP(a,b,c):=\prod_{i=1}^{a}\prod_{j=1}^{b}\prod_{k=1}^{c}\frac{i+j+k-1}{i+j+k-2}.
\end{equation}

This formula was generalized by Cohn, Larsen and Propp \cite[Proposition 2.1]{CLP} when they presented a correspondence between lozenge tilings of a semihexagon with unit triangles removed on the base and semi-strict Gelfand-Tsetlin patterns. In particular, the \emph{dented semihexagon} $S_{a,b}(s_1,s_2,\dots,s_a)$ is the region obtained from the upper half of the symmetric hexagon of side-lengths $b,a,a,b,a,a$ (in clockwise order, starting from the north side) by removing $a$ up-pointing unit triangles along the base at the positions $s_1,s_2,\dotsc,s_a$ from left to right. The number of lozenge tilings of the dented semihexagon is given by
\begin{equation}\label{CLPeq}
\M(S_{a,b}(s_1,s_2,\dots,s_a))=\prod_{1\leq i<j \leq a}\frac{s_j-s_i}{j-i},
\end{equation}
where we use the notation $\M(R)$ for the number of lozenge tilings of the region $R$.

In this paper, we consider a hybrid object between MacMahon's hexagon and Cohn--Larsen--Propp's dented semihexagon. Our region is a hexagon on the triangular lattice, as in the case of MacMahon's theorem,  with an arbitrary set of unit triangles removed along a horizontal axis, like the dents in Cohn--Larsen--Propp's theorem (see Fig. \ref{multiplefernfig} A).  In general, the tiling numbers of such regions are not given by  simple product formula. However, we show that their tiling number only changes by a simple multiplicative factor when we shuffle the positions of up- and down-pointing removed triangles (see Theorem \ref{factorization} and its generalizations in Theorems \ref{genfactorization} and \ref{qfactorization}).

Our main theorems imply a number of known tiling enumerations of regions with `holes' (e.g. \cite{Ciu1, Twofern, HoleDent}). Here, a \emph{hole} is a portion removed from a region. We also show that our main theorems can be used to obtain new results in asymptotic enumeration of tilings, including the enumeration of the so-called `\emph{doubly--dented hexagon}' in \cite{Twofern}, the main results of the first author about hexagon with three arrays of triangles removed in \cite[Theorems 2.11 and 2.12]{HoleDent} and Ciucu's main results about `\emph{$F$-cored hexagons}' in \cite[Theorems 1.1 and 2.1]{Ciu1} (see Corollary \ref{cordual}).



\section{Shuffling theorems}
Let $x,y,n,u,d$ be nonnegative integers, such that $u,d \leq n$. Consider a symmetric hexagon of side-lengths $x+n-u,y+u,y+d,x+n-d,y+d,y+u$ in clockwise order, starting from the north side. We remove $u+d$ arbitrary unit triangles along the lattice line $l$ that contains the west and the east vertices of the hexagon. Assume further that, among these $u+d$ removed triangles, there are $u$ up-pointing ones and $d$ down-pointing ones. Let $U=\{s_1,s_2,\dotsc,s_u\}$ and $D=\{t_1,t_2,\dots, t_d\}$ be, respectively, the sets of positions of the up-pointing and down-pointing removed unit triangles (ordered from left to right), such that $|U\cup D|=n$ (i.e., $U,D\subseteq [x+y+n]$, $U$ and $D$ are not necessarily disjoint). Denote by $H_{x,y}(U; D)$ the resulting region . See Fig.  \ref{multiplefernfig} A for an example of such a region and Fig. \ref{multiplefernfig} B for a sample tiling; we ignore the two horizontal unit ``barriers'' at the positions 6 and 13 on $l$ at the moment.

We now consider `shuffling' the up- and down-pointing unit triangles in the symmetric difference $U\Delta D$ to obtain new position sets $U'$ and $D'$ for the up-pointing and down-pointing removed triangles. (In particular, $U$ and $U'$  have the same size, and so do $D$ and $D'$.)  The following theorem shows that the shuffling of removed triangles only changes the tiling number by a simple multiplicative factor. Moreover, the factor can be written in a similar form to Cohn--Larsen--Propp's formula (i.e. the product on the right-hand side of Eq. \ref{CLPeq}).

\begin{figure}\centering
\includegraphics[width=12cm]{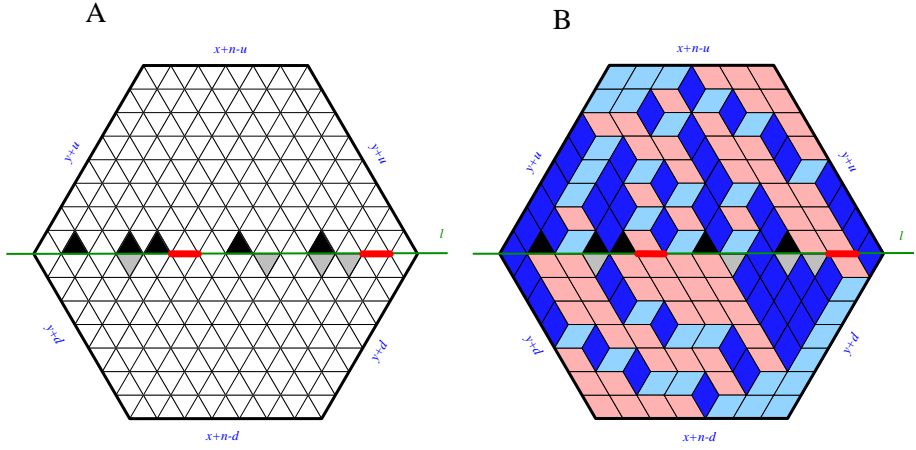}
\caption{(A) The region $H_{4,3}(2,4,5,8,11;\ 4,9,11,12)$ and (B) a lozenge tiling of it. The black and shaded triangles indicate the unit triangles removed.}\label{multiplefernfig}
\end{figure}

\begin{thm}[Shuffling Theorem]\label{factorization} For nonnegative integers $x,y,n,u,d$ ($u,d\leq n$) and four ordered subsets $U=\{s_1,s_2,\dotsc,s_u\}$,  $D=\{t_1,t_2,\dots, t_d\}$, $U'=\{s'_1,s'_2,\dotsc,s'_{u}\}$, and  $D'=\{t'_1,t'_2,\dots, t'_{d}\}$ of $[x+y+n]$  such that $U\cup D =U'\cup D'$, and $U\cap D =U'\cap D'$. Then
\begin{equation}\label{maineq1}
  \frac{\M(H_{x,y}(U; D))}{\M(H_{x,y}(U'; D'))}=\prod_{1\leq i <j\leq u}\frac{s_j-s_i}{s'_j-s'_i}\displaystyle\prod_{1\leq i <j\leq d}\frac{t_j-t_i}{t'_j-t'_i}.\end{equation}
\end{thm}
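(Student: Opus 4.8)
The plan is to deduce Theorem~\ref{factorization} from a single ``elementary swap'' of two removed triangles, handled by graphical (Kuo) condensation on the dual graph.

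First, a purely algebraic reduction. Write $V(S):=\prod_{p,q\in S,\ p<q}(q-p)$ for a finite set $S\subset\mathbb Z$, so that the right-hand side of \eqref{maineq1} equals $\dfrac{V(U)V(D)}{V(U')V(D')}$. Splitting $U=(U\cap D)\sqcup(U\setminus D)$ and $D=(U\cap D)\sqcup(D\setminus U)$ gives $V(U)V(D)=V(U\setminus D)\,V(D\setminus U)\cdot c$, where $c$ depends only on $U\cap D$ and $U\cup D$. Since we are given $U\cap D=U'\cap D'$ and $U\cup D=U'\cup D'$ (hence also $U\Delta D=U'\Delta D'$ as sets), this $c$ is the same for the primed data, and \eqref{maineq1} is \emph{equivalent} to the assertion that $\M(H_{x,y}(U;D))\big/\bigl(V(U\setminus D)\,V(D\setminus U)\bigr)$ depends only on $x,y$, on $U\cap D$, and on $U\cup D$. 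Two compatible configurations differ by a permutation of the ``movable'' triangles occupying the positions of $U\Delta D$, and any such permutation is a product of transpositions each of which swaps the orientations of two \emph{consecutive} elements of $U\Delta D$ that currently carry opposite orientations. Since the hypothesized identity telescopes along such a chain of swaps, it suffices to prove \eqref{maineq1} when $(U',D')$ is obtained from $(U,D)$ by one elementary swap: $p\in U\setminus D$ and $q\in D\setminus U$ are consecutive in $U\Delta D$, and $U'=(U\setminus\{p\})\cup\{q\}$, $D'=(D\setminus\{q\})\cup\{p\}$.

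For the elementary-swap case I would pass to the planar bipartite dual graph $G$ of $H_{x,y}(U;D)$, after inserting the unit ``barriers'' indicated in Fig.~\ref{multiplefernfig} on the axis $l$ near $p$ and $q$ (inserting a barrier merely forces a fixed lozenge, so it leaves $\M$ unchanged), and apply Kuo's condensation identity
\[ \M(G)\,\M\bigl(G\setminus\{a,b,c,d\}\bigr)=\M\bigl(G\setminus\{a,c\}\bigr)\,\M\bigl(G\setminus\{b,d\}\bigr)\pm\M\bigl(G\setminus\{a,d\}\bigr)\,\M\bigl(G\setminus\{b,c\}\bigr) \]
for four boundary vertices $a,b,c,d$ placed in cyclic order, two of them adjacent to the barrier near $p$ and two adjacent to the barrier near $q$. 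The point of the placement is that $G\setminus\{a,c\}$, $G\setminus\{b,d\}$, $G\setminus\{a,d\}$, $G\setminus\{b,c\}$, and $G\setminus\{a,b,c,d\}$ are again regions of the family $H_{x,y}(\cdot\,;\cdot)$ — or disjoint unions of two such, when forcing the lozenges at the deleted vertices splits off an independently-tileable sub-region — with dent multisets one can read off directly, and with one of the terms being (up to forced lozenges) $\M(H_{x,y}(U';D'))$ while the remaining ones have a strictly smaller value of a size parameter such as $x+y$ (a forced row of lozenges along the top or bottom shrinks the hexagon). Running an induction on that parameter — the base case being when the region degenerates to a dented semihexagon, governed by \eqref{CLPeq}, or to a hexagon with no dents, governed by \eqref{Maceq}, or splits completely — then turns the condensation identity into a linear relation among the normalized quantities from the previous paragraph, from which the elementary-swap instance of \eqref{maineq1} follows once the Vandermonde factors are matched up.

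The main obstacle I anticipate is the geometry of the condensation step: choosing the barrier positions and the four vertices $a,b,c,d$ so that \emph{every} region produced by the identity lies in the family $H_{x,y}(\cdot\,;\cdot)$ (rather than in some out-of-family shape), reading off its dent data, and determining which version of Kuo's identity — i.e.\ the correct sign — is in force; boundary cases (when $p$ or $q$ is extreme on $l$, or when $U\cap D$ has elements strictly between $p$ and $q$) will need separate, slightly perturbed pictures. Once the correct configuration is fixed, the remaining work is the routine but delicate bookkeeping that the products of $V(\cdot)$-factors contributed by the smaller regions recombine, via the condensation relation, into exactly the product on the right of \eqref{maineq1}. (Alternatively, a non-intersecting-lattice-path reading of tilings, via the Gelfand--Tsetlin picture behind \eqref{CLPeq}, would express $\M(H_{x,y}(U;D))$ as a sum over profiles along $l$ and recast shuffling invariance as a Schur-function identity; this seems less robust than the condensation route but could serve as a cross-check and as a bridge to the symmetric-function connections advertised in the abstract.)
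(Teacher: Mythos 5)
Your reduction to a single elementary swap (telescoping over adjacent transpositions of oppositely oriented elements of $U\Delta D$) is sound, but the condensation step that is supposed to handle the swap does not work as described, and it is precisely where the content of the theorem lies. First, Kuo's lemma (Lemma \ref{kuothm}) requires the four deleted vertices to appear in cyclic order around a \emph{single face} of the planar dual graph. You place two of them near $p$ and two near $q$; since consecutive elements of $U\Delta D$ need not be adjacent on the axis $l$ (positions of $U\cap D$, or unoccupied positions, may lie between them), these four vertices do not in general lie on a common face, and inserting barriers cannot fix this: deleting the edge across one unit segment only merges the two unit triangles along that segment into one quadrilateral face. Second, your parenthetical claim that inserting a barrier ``merely forces a fixed lozenge, so it leaves $\M$ unchanged'' is false: a barrier \emph{forbids} the vertical lozenge straddling that segment and genuinely changes the tiling count -- indeed the whole surprise of Theorems \ref{genfactorization} and \ref{qfactorization} is that the \emph{ratio}, not the count, is independent of $B$. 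Third, the decisive assertions -- that each of the five remaining terms in the condensation identity is, after removing forced lozenges, again a region $H_{x,y}(\cdot;\cdot)$ with readable dent data, that one of them is exactly $H_{x,y}(U';D')$, and that the others have strictly smaller size parameter -- are left unverified (you flag them yourself as the main obstacle). Deleting boundary or interior unit triangles near $p$ and $q$ has no evident forced-lozenge mechanism that turns the up-dent at $p$ into a down-dent and simultaneously does the reverse at $q$, so as it stands the proposal is a plan rather than a proof.

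For contrast, the paper never localizes the shuffle. It proves the stronger weighted statement (Theorem \ref{qfactorization}, which specializes to Theorem \ref{factorization} at $q=1$, $B=\emptyset$) by induction on $x+y$: the base cases $y=0$ and $x=|B|$ split the region along $l$ into two dented semihexagons via the Region-splitting Lemma \ref{RS} and the $q$-analog of Cohn--Larsen--Propp (Lemma \ref{qCLP}); the induction step applies Kuo condensation to the dual graph of the hexagon augmented by one row on top, with the four vertices taken at corner triangles and at the extreme free positions $\alpha,\beta$ of $(U\cup D\cup B)^c$ on $l$ -- all on the outer face -- yielding a recurrence in $(x,y)$ in which $U$ is enlarged by $\alpha,\beta$ but is never reshuffled; one then checks algebraically that the conjectured product satisfies the same recurrence for both $(U,D)$ and $(U',D')$. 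To salvage your route you would need to exhibit and verify an actual one-face configuration of four vertices realizing the swap (including the boundary cases you mention), or else drop the swap reduction and run the size induction as above.
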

We would like to emphasize that, in general, the numbers of tilings of two regions on the left-hand side of   Eq. \ref{maineq1} are \emph{not} given by simple product formulas.

\begin{rmk}[A geometrical interpretation] By Cohn--Larson--Propp's theorem (see Eq. \ref{CLPeq}), Eq. \ref{maineq1} in Theorem \ref{factorization} can be written in terms of tiling numbers as
\begin{align}\label{geointereq}
  \frac{\M(H_{x,y}(U; D))}{\M(H_{x,y}(U'; D'))}=\frac{\M(S_{u,x+y+n-u}(U))\M(S_{d,x+y+n-d}(D))}{\M(S_{u,x+y+n-u}(U'))\M(S_{d,x+y+n-d}(D'))}.
\end{align}
 It would be interesting to have a combinatorial explanation for this identity.
 
Moreover, one readily sees that the two dented semihexagons in the numerator of the right-hand side are obtained by dividing the region $H_{x+y,0}(U; D)$ along the horizontal axis $l$. Similarly, the two dented semihexagons in the denominator are obtained by dividing $H_{x+y,0}(U'; D')$ along the horizontal axis. This means that, identity  (\ref{geointereq}) is equivalent to
\begin{equation}\label{geointereq2}
\M(H_{x,y}(U; D))\M(H_{x+y,0}(U'; D'))=\M(H_{x,y}(U'; D'))\M(H_{x+y,0}(U; D)).
\end{equation}
The both sides of (\ref{geointereq2}) count pairs of tilings of doubly-dented hexagons. It would be interesting to find a bijective proof for this identity.
\end{rmk}


We can generalize our Shuffling Theorem \ref{factorization} by additionally allowing the unit triangles in the symmetric difference $U \Delta D$ to `flip' (from up-pointing to down-pointing, and vice versa). It is possible, then, that the new position sets of removed up-pointing triangles and removed down-pointing triangles $U'$ and $D'$ may have sizes \emph{different} than those of $U$ and $D$.
We also allow the appearance of ``\emph{barriers}'' along the axis $l$. A barrier is a unit horizontal lattice interval which is not allowed to be contained within a lozenge in a tiling. Assume that we have a set of barriers at the positions $B\subseteq [x+y+n]-U\cup D$ so that vertical lozenges may not appear at the positions in $B$ and that $|B|\leq x$ (see the red barriers in Fig.  \ref{multiplefernfig}; $B=\{6,13\}$ in this case). We now consider the tilings of $H_{x,y}(U;\ D)$  which are compatible with the set of barriers $B$. Denote by $H_{x,y}(U;D;B)$ the doubly-dented hexagons with such setup of removed unit triangles and barrier. 
\begin{thm}[Generalized Shuffling Theorem]\label{genfactorization} For nonnegative integers $x,y,n,u,d$ ($u,d\leq n$) and five ordered subsets $U=\{s_1,s_2,\dotsc,s_u\}$,  $D=\{t_1,t_2,\dots, t_d\}$, $U'=\{s'_1,s'_2,\dotsc,s'_{u'}\}$,  $D'=\{t'_1,t'_2,\dots, t'_{d'}\}$, and $B:=\{k_1,k_2,\dots,k_b\}$ of $[x+y+n]$  such that $U\cup D =U'\cup D'$, $U\cap D =U'\cap D'$, $B\cup (U\cup D)=\emptyset$, and $|B|\leq x$, we always have
\small{\begin{equation}\label{genmaineq}
  \frac{\M(H_{x,y}(U;D;B))}{\M(H_{x,y}(U';D';B))}= \frac{\displaystyle\prod_{1\leq i <j\leq u}\frac{s_j-s_i}{j-i}\displaystyle\prod_{1\leq i <j\leq d}\frac{t_j-t_i}{j-i}\PP(u,d,y)}{\displaystyle\prod_{1\leq i <j\leq u'}\frac{s'_j-s'_i}{j-i}\displaystyle\prod_{1\leq i <j\leq d'}\frac{t'_j-t'_i}{j-i}\PP(u'd',y)}.
\end{equation}}
\end{thm}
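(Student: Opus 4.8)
The plan is to recast Theorem \ref{genfactorization} as a single multiplicative statement and then prove it by Kuo's graphical condensation together with an induction. Writing
\[
P(X)=\prod_{1\le i<j\le|X|}\frac{x_j-x_i}{j-i}
\]
for an ordered set $X=\{x_1<\dots<x_{|X|}\}$ (so that $P(U)$ and $P(D)$ are the products appearing on the right of \eqref{CLPeq}), identity \eqref{genmaineq} is equivalent to the assertion that
\[
\Phi:=\frac{\M(H_{x,y}(U;D;B))}{P(U)\,P(D)\,\PP(u,d,y)}
\]
depends only on $x$, $y$, $B$, $U\cup D$ and $U\cap D$, and not on how $U\cup D$ is partitioned into the up- and down-pointing removed triangles. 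Indeed, two admissible data sets with the same $U\cup D$, $U\cap D$ and $B$ would then produce the same $\Phi$, which is precisely \eqref{genmaineq}. So it suffices to prove this invariance of $\Phi$.

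I would establish it by induction on a suitable complexity parameter (say $n$, with secondary inductions on $x$ and on $|B|$), the inductive step being supplied by Kuo's graphical condensation applied to the dual graph of $H_{x,y}(U;D;B)$. Choosing the four distinguished vertices to be boundary unit triangles of the hexagon lying on, or immediately adjacent to, the axis $l$ near a corner, one obtains a relation of the shape
\[
\M(H_{x,y}(U;D;B))\,\M(R_0)=\M(R_1)\,\M(R_2)\pm\M(R_3)\,\M(R_4),
\]
where each $R_i$ is again a barriered doubly-dented hexagon whose defining parameters are strictly smaller — differing from $H_{x,y}(U;D;B)$ by shifting a dent, merging a pair of opposite dents (which adjoins a position to $U\cap D$), or removing a barrier. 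A barrier at a position $k$ is dissolved through the matching identity $\M(G-e)=\M(G)-\M(G-\{v_1,v_2\})$, where $e$ is the edge of the dual graph forbidden by the barrier and $v_1,v_2$ are its endpoints; the second term is the count for the region with $k$ adjoined to both $U$ and $D$. Substituting the inductive expressions $\M(R_i)=\Phi_i\,P(\cdot)\,P(\cdot)\,\PP(\cdot,\cdot,y)$ into the condensation relation and cancelling the $P$- and $\PP$-factors reduces the invariance of $\Phi$ to a finite identity among the $\Phi_i$, which holds by the inductive hypothesis once the data of the $R_i$ are seen to pair up with matching $U\cup D$, $U\cap D$ and $B$. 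The base cases are those in which the induction bottoms out, essentially $B=\emptyset$ with $x=0$ or $u=0$ or $d=0$: there a family of lozenges is forced, $H_{x,y}(U;D)$ splits along $l$ into a central symmetric hexagon $H(u,d,y)$ of tiling number $\PP(u,d,y)$ by MacMahon's formula \eqref{Maceq} and two dented semihexagons of tiling numbers $P(U)$ and $P(D)$ by Cohn--Larsen--Propp's formula \eqref{CLPeq}, so $\Phi=1$ independently of the partition, anchoring the induction.

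The main obstacle is the bookkeeping needed to keep the class of barriered doubly-dented hexagons closed under the condensation move: the four Kuo vertices must be placed so that every $R_i$ is again some $H_{x',y'}(U';D';B')$ with $|B'|\le x'$, with $B'$ disjoint from $U'\cup D'$, and with parameters small enough for the induction, and the $R_i$ must pair off with matching unordered data so that the inductive hypothesis applies on each side. One must also pin down the sign in Kuo's identity — it is governed by the $2$-coloring of the dual graph, hence by the parities of $x,y,u,d$ and of the positions involved — and reconcile it with the signs concealed in the products $\prod_{i<j}(s_j-s_i)$, which need not remain positive once a flip disturbs the left-to-right order of the removed triangles. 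Once these points are settled, checking that the ansatz satisfies the condensation recurrence is a routine manipulation of the product formulas \eqref{Maceq} and \eqref{CLPeq}.
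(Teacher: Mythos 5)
Your reformulation of \eqref{genmaineq} as the invariance of $\Phi=\M(H_{x,y}(U;D;B))/\bigl(P(U)P(D)\PP(u,d,y)\bigr)$ under reorientation of the triangles in $U\Delta D$ is correct, and your overall skeleton (Kuo condensation plus induction, with the product ansatz checked against the resulting recurrence) is the same as the paper's. But the two ingredients you make concrete both fail. First, the base cases are wrong: for $B=\emptyset$ and $x=0$ (or $u=0$, or $d=0$) the region does \emph{not} split along $l$ into a central hexagon of tiling number $\PP(u,d,y)$ and two dented semihexagons counted by $P(U)$ and $P(D)$, and $\Phi$ need not equal $1$ there. Take $x=0$, $y=1$, $U=\{1,3\}$, $D=\emptyset$ (hexagon of sides $0,3,1,2,1,3$): every tiling is forced to have a vertical lozenge at the unique free position $2$, and one checks $\M(H_{0,1}(U;D))=1$, whereas $P(U)P(D)\PP(2,0,1)=2$, so $\Phi=\tfrac12$. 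The degenerate cases that actually anchor the induction are $y=0$ and $x=|B|$, where the Region-splitting Lemma \ref{RS} cuts the region along $l$ into two dented semihexagons (after discarding forced lozenges in the second case), and these are exactly the cases the true recurrence bottoms out at.

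Second, the inductive engine is not in place. You never exhibit a Kuo configuration, and the one you gesture at (four boundary triangles of $H_{x,y}(U;D;B)$ near a corner of $l$) does not produce regions obtained by ``shifting a dent, merging dents, or removing a barrier''; moreover Lemma \ref{kuothm} requires $|V_1|=|V_2|+1$, so the dual graph of the balanced region must first be augmented --- in the paper a row of unit triangles is added along the top, and two of the four vertices are the up-pointing triangles at the first and last free positions of $l$. The resulting recurrence \eqref{recurrence1} replaces $(x,y)$ by $(x-1,y)$, $(x,y-1)$, $(x-1,y-1)$ while \emph{adjoining} those free positions to $U$ (leaving $D$ and $B$ untouched); in particular it increases $n$, so your primary induction on $n$ runs in the wrong direction, while the paper inducts on $x+y$. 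Your proposed dissolution of barriers via $\M(G-e)=\M(G)-\M(G-\{v_1,v_2\})$ is a correct identity, but it introduces a subtraction whose compatibility with the multiplicative ansatz you neither state nor verify; in the paper no such step is needed, since $B$ is simply carried along unchanged through the condensation, and the right-hand side of \eqref{genmaineq} is independent of $B$. (Also, the sign worries are unfounded: the condensation identity used has only plus signs, and $U',D'$ are indexed increasingly, so all products $\prod_{i<j}(s'_j-s'_i)$ are positive.) Finally, note that the paper proves the weighted statement, Theorem \ref{qfactorization}, and obtains the present theorem as its $q\to 1$ specialization; as written, your proposal is an outline in the same spirit, but with a false base case and without the key recurrence.
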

An surprising fact is that the right-hand side of Eq. \ref{genmaineq}  does \emph{not} depend on the barrier set $B$. It would be quite interesting to explain this phenomenon combinatorially.

We note that the above generalized shuffling theorem can be viewed as the enumeration of certain `restricted tilings' in the case when $U\cap D=\emptyset$. Indeed, the tilings of $H_{x,y}(U;\ D;\ B)$ are in bijection with tilings of $H_{x,y+(u+d-n)}(U\setminus D; D\setminus U)$ which contain a fixed vertical lozenge at each position in $U \cap D$ and which do not contain a vertical lozenge at each position in $B$. These types of tiling enumerations were investigated by Fischer \cite{Fischer} and Fulmek and Krattenthaler in \cite{FK1,FK2}.

\begin{figure}\centering
\includegraphics[width=7cm]{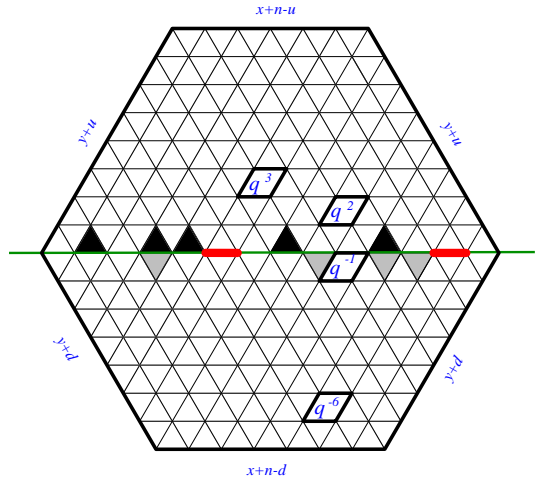}
\caption{Assigning weights to lozenges in a doubly-dented hexagon.}\label{weight}
\end{figure}

We can assign to each right-tilting lozenge above $l$ (resp., below $l$) a weight $q^{z}$ (resp, a weight $q^{-z}$), where $(z-\frac{1}{2})\frac{\sqrt{3}}{2}$ is the distance from its center to the axis $l$ (see Figure \ref{weight}). The `\emph{tiling-generating function}' of a region $H_{x,y}(U;D; B)$ is the sum of the weights of all tilings of the region, where the \emph{weight} of a tiling is the product of weights of all its constituent lozenges. We denote this tiling-generating function by $\M_q(H_{x,y}(U;D;B))$. We have a  $q$-analog of Theorem \ref{genfactorization}:
\begin{thm}[$q$-Shuffling Theorem]\label{qfactorization} With the same assumptions as in Theorem \ref{genfactorization}, we have
\begin{align}\label{qmianeq}
  \frac{\M_q(H_{x,y}(U;D;B))}{\M_q(H_{x,y}(U';D';B))}=q^C \frac{\displaystyle\prod_{1\leq i <j\leq u}\frac{q^{s_j}-q^{s_i}}{q^{j}-q^{i}}\displaystyle\prod_{1\leq i <j\leq d}\frac{q^{t_j}-q^{t_i}}{j-i}\PP_q(u,d,y)}{\displaystyle\prod_{1\leq i <j\leq u'}\frac{q^{s'_j}-q^{s'_i}}{q^{j}-q^{i}}\displaystyle\prod_{1\leq i <j\leq d'}\frac{q^{t'_j}-q^{t'_i}}{q^{j}-q^{i}}\PP_q(u'd',y)}.
   \end{align}
where 
\begin{equation}
\PP_q(a,b,c):=\prod_{i=1}^{a}\prod_{j=1}^{b}\prod_{k=1}^{c}\frac{1-q^{i+j+k-1}}{1-q^{i+j+k-2}}.
\end{equation}
and 
where
\begin{align*}
C=(d-x-n)\binom{y+d+1}{2}-(d'-x-n)\binom{y+d'+1}{2}+udy-u'd'y.
\end{align*}
\end{thm}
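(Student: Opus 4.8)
The plan is to deduce the $q$-statement by the same structural route used (presumably) for Theorem \ref{genfactorization}, upgrading each ingredient to its weighted analog. First I would reduce to the case of an elementary move: it suffices to prove Eq. \eqref{qmianeq} when $(U';D')$ is obtained from $(U;D)$ by a single ``swap/flip'' of one triangle in $U\Delta D$ across the axis $l$ (turning one up-pointing removed triangle at position $p$ into a down-pointing one at the same position, or vice versa, or sliding a lone removed triangle from one position to an adjacent free position), since any two configurations with the same $U\cup D$ and $U\cap D$ are connected by a sequence of such moves and both sides of \eqref{qmianeq} are multiplicative along such a sequence (the exponent $C$ and the $\PP_q$-factors telescope). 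The barrier set $B$ is inert under these moves, which is exactly the ``surprising'' independence already flagged after Theorem \ref{genfactorization}; I would carry $B$ along untouched.

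Next I would set up the graphical-condensation engine in weighted form. Kuo condensation (and its variants) holds verbatim for graphs with edge weights, producing an identity of the form $\M_q(G)\M_q(G_{12,34}) = \M_q(G_{13})\M_q(G_{24}) \pm \M_q(G_{14})\M_q(G_{23})$ once the weight of every perfect matching of each sub-region factors consistently; the delicate point is to choose the four marked boundary vertices on the dual graph of $H_{x,y}(U;D;B)$ so that each of the six regions appearing is again a doubly-dented hexagon of the same type (with one fewer removed triangle, or with a removed triangle relocated) and so that the ``extra'' lozenges forced by the vertex removals contribute a clean monomial in $q$. This is where the barriers must be respected: the marked vertices should be taken away from $B$, so that the condensation recursion stays inside the class $H_{x,y}(\cdot;\cdot;B)$. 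I would then get a recurrence expressing $\M_q(H_{x,y}(U;D;B))$ in terms of two regions with $U\cup D$ of size $n-1$ (or with a single triangle moved), matching a parallel recurrence for the Cohn--Larsen--Propp-type product on the right of \eqref{qmianeq}.

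The bookkeeping core is the $q$-power. Each right-tilting lozenge above (below) $l$ carries weight $q^{z}$ (resp.\ $q^{-z}$) with $z$ the half-integer-shifted distance to $l$; when a removed up-triangle at position $p$ becomes a removed down-triangle at position $p$, a column of forced lozenges between the old and new positions changes side, and the resulting net change in total weight is a monomial $q^{f(p)}$ for an explicit linear-in-$p$ exponent $f$. Summing these contributions along a sequence of elementary moves that transforms $(U;D)$ into $(U';D')$ produces exactly the constant $C=(d-x-n)\binom{y+d+1}{2}-(d'-x-n)\binom{y+d'+1}{2}+udy-u'd'y$; the two ``trigonometric'' factors $\binom{y+d+1}{2}$ and $udy$ come respectively from the vertical extent of the forced strips and from the interaction of the two fans of forced lozenges, and I would verify this by checking the base case $y=0$ (where $H_{x,0}(U;D;B)$ splits along $l$ into two dented semihexagons and one can invoke a $q$-deformation of Eq. \eqref{CLPeq}, namely the weighted Lindström--Gessel--Viennot / Gelfand--Tsetlin count giving $\prod_{i<j}\frac{q^{s_j}-q^{s_i}}{q^j-q^i}$ up to a global power of $q$) and then tracking how both sides scale as $y$ increases by $1$.

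The main obstacle I anticipate is precisely this last step: matching the $q$-exponent $C$ move-by-move. The combinatorial skeleton (reduction to elementary moves, weighted Kuo condensation, the $\PP_q$-telescoping) is robust and essentially forced, but pinning down $f(p)$ for each elementary move and confirming that the telescoped sum collapses to the stated $C$ — including the cross term $udy-u'd'y$, which only appears when a flip changes the \emph{sizes} of $U$ and $D$ — requires careful, coordinate-level accounting of how the forced-lozenge regions sit relative to $l$ and to each other. I would isolate this as a standalone lemma (``effect of one elementary move on $\M_q$'') and prove Theorem \ref{qfactorization} by induction on $|U\Delta D|$ using that lemma together with the weighted condensation recurrence; setting $q=1$ must recover Theorem \ref{genfactorization}, which gives a useful consistency check at every stage.
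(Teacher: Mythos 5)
There is a genuine gap in your plan, concentrated in the ``effect of one elementary move'' lemma that your whole induction rests on. Flipping a removed triangle at a position $p$ from up-pointing to down-pointing forces nothing: no column of lozenges is forced to switch sides of $l$, so the change in $\M_q$ under a single flip is \emph{not} a monomial $q^{f(p)}$ (times anything simple) --- it is exactly the nontrivial ratio of Cohn--Larsen--Propp-type products and $\PP_q$-factors that the theorem asserts. Hence the lemma you isolate is essentially the full theorem for one flip, and your proposal supplies no independent way to prove it. Moreover, the condensation scheme you sketch to prove it --- choosing the four Kuo vertices so that the six resulting regions have $|U\cup D|$ of size $n-1$ or a relocated triangle --- is not available: deleting vertices of the dual graph along $l$ \emph{adds} dents; it can never remove or relocate an existing removed triangle, so Kuo condensation cannot decrease the number of dents. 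Finally, ``tracking how both sides scale as $y$ increases by $1$'' is not a recursion one can actually write down; there is no simple one-step transfer in $y$ alone, and your base-case analysis omits the second boundary case the induction needs.

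The paper proceeds differently and avoids all of this. It proves the multiplicative form \eqref{refineeq} of \eqref{qmianeq} for arbitrary $(U;D)$ versus $(U';D')$ simultaneously, by induction on $x+y$ (no reduction to elementary moves at all). Kuo condensation (Lemma \ref{kuothm}) is applied to the dual graph of the region with a layer of unit triangles added on top, with the four marked vertices at the two extreme free positions $\alpha,\beta$ of $(U\cup D\cup B)^c$ on $l$ and at two corner triangles; after clearing forced lozenges (which contribute explicit $q$-powers that cancel), this yields the recurrence \eqref{recurrence1}, which lowers $x$ and $y$ while \emph{adding} the dents $\alpha,\beta$ to $U$ and keeping $D$ and $B$ fixed. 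The base cases are $y=0$ and $x=|B|$, both handled by the Region-splitting Lemma \ref{RS}, which splits the region along $l$ into two dented semihexagons, combined with the weighted Cohn--Larsen--Propp formula of Lemma \ref{qCLP}; your plan covers only $y=0$. The exponent $C$ is never computed move-by-move: one checks algebraically that the ratio function $g$ on the right-hand side satisfies the same Kuo recurrence (all powers of $q$ cancel in that verification), and the induction closes. To salvage your route you would need an independent proof of the one-flip lemma, which your proposal does not provide.
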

We note that MacMahon's Theorem \cite{Mac} states that the number of plane partitions fitting in an $a\times b \times c$- box of given by $\PP_q(a,b,c)=\prod_{i=1}^{a}\prod_{j=1}^{b}\prod_{k=1}^{c}\frac{1-q^{i+j+k-1}}{1-q^{i+j+k-2}}$. The formula (\ref{Maceq}) follows by letting $q \to 1$.

\begin{rmk}
We note that our shuffling theorems (Theorems \ref{factorization}, \ref{genfactorization}, and \ref{qfactorization}) above only for symmetric hexagons with removed unit triangles and barriers (we call them generally the `\emph{obstacles}') running along the horizontal axis $l$ that passes the east and west vertices of the hexagons. One would ask for similar results for general hexagons (not necessarily symmetric) with obstacles on an arbitrary horizontal lattice line (not necessarily passing a vertex of the hexagon). However, the latter case can be implied from our main theorems via `\emph{forced lozenges}'.  A \emph{forced lozenge} in a region $R$ is a lozenge that appears in every tiling of $R$. In the unweighted case, the removal of forced lozenges does not change the tiling number of the region. We consider a general hexagons of side-lengths $x+n-u, y+u, z+d,x+n-d,y+d,z+u$ (in the clockwise order from the north side), and we would like removing unit triangles and placing barriers along an \emph{arbitrary} horizontal lattice line $l$. There are four possible cases depending on the relative positions of the axis $l$ and the east and the west vertices of the hexagon. Figure \ref{forced2} shows that, in all four cases, by removing forced lozenges from a symmetric doubly-dented hexagon as considered in our shuffling theorems, we can obtain general doubly-dented hexagon with an arbitrary position of the axis $l$. This means that one can obtain similar shuffling theorems for general hexagons with obstacles on an arbitrary axis from our shuffling theorems.
\end{rmk}

\begin{figure}\centering
\includegraphics[width=12cm]{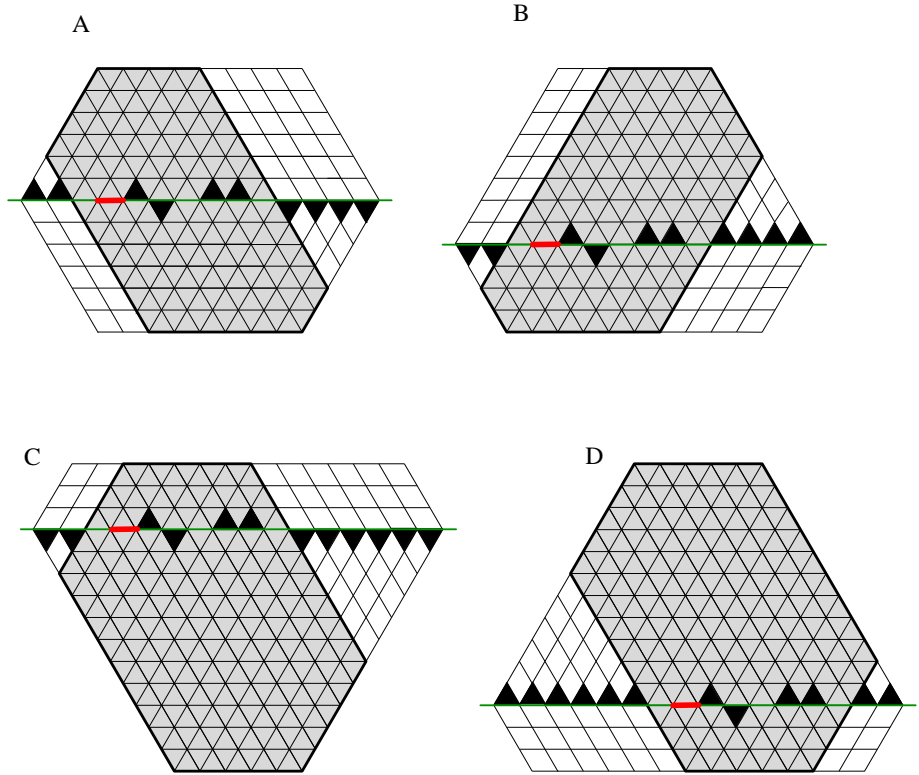}
\caption{Obtaining general doubly-dented hexagons from symmetric doubly-dented hexagons by removing forced lozenges.}\label{forced2}
\end{figure}

\section{An asymptotic enumeration}

 Ciucu and Krattenthaler in \cite{CK} proved a counterpart of MacMahon's theorem (Eq. \ref{Maceq}) by obtaining the asymptotic tiling number of the exterior of a concave hexagonal contour in which we turn $120^{\circ}$ after drawing each edge (see Fig. \ref{contour} B; the tiling number in MacMahon's theorem is for the interior of a hexagonal contour in which each turn $60^{\circ}$ after drawing each edge as in Fig. \ref{contour} A). Ciucu \cite{Ciu1} later obtained a similar counterpart of Cohn--Larsen--Propp's theorem corresponding to the exterior of a concave polygon with an arbitrary number sides (see the contour in Fig. \ref{contour} C). Recently, the first author generalized the asymptotic result of Ciucu to the union of three polygons \cite{HoleDent}.  In Corollary \ref{cordual}, we will show a multi-parameter generalization of the latter two asymptotic results.

\begin{figure}\centering
\includegraphics[width=12cm]{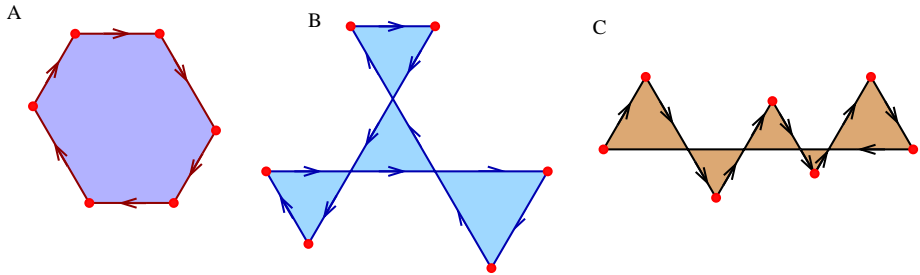}
\caption{Three contours: (A) the contour in MacMahon's theorem, (B) the contour in \cite{CK}, and (C) the contour in \cite{Ciu1}.}\label{contour}
\end{figure}

 We now assume that the set of removed unit triangles is partitioned into $k$ separated clusters (i.e. chains of contiguous unit triangles). Denote these clusters by $C_1,C_2,\dotsc,C_k$ and the distances between them by $d_1,d_2,\dotsc,d_{k-1}$ ($d_i >0$), as they appear from left to rights.  For the sake of convenience, we assume that $C_1$ is attached to the west vertex of the hexagon, that $C_k$ is attached to the east vertex of the hexagon, and that $C_1$ and $C_k$ may be empty.  We use the notation $H_{x,y}(C_1,\dots,C_k; d_1,\dots,d_{k-1})$ for these regions (see Fig. \ref{fern} for an example; the black unit triangles indicate the ones removed). For each cluster $C_i$, we use the notations $U_i$ and $D_i$ for the index sets of its up-pointing and down-pointing triangles.  For each cluster $C_i$, we can shuffle unit triangles at the positions in $U_i \Delta D_i$ to obtain a new cluster $C'_i$, for $i=1,2,\dots,k$. The index sets of triangles in $C'_i$ are denoted by $U'_i$ and $D'_i$. Assume that $|U_i|=u_i$, $D_i=d_i$, $|U'_i|=u'_i$, $|D_i|=d'_i$ and $|U_i\cup D_i|=|U'_i\cup D'_i|=f_i$. We call $f_i$ the \emph{length} of the cluster $C_i$ (and also the length of $C'_i$).
 


 We now consider the behavior of the tiling number of the region when the side-lengths of the outer hexagon and the distances between two consecutive clusters get large.
\begin{cor}\label{cordual}For nonnegative integers $x$ and $y$,
\begin{align}\label{dualeq}
\lim_{N\to \infty}&\frac{\M(H_{Nx,Ny}(C_1,\dots,C_k; Nd_1,\dots,Nd_{k-1}))}{\M(H_{Nx,Ny}(C'_1,\dots,C'_k; Nd_1,\dots,Nd_{k-1}))}\notag\\
&=\prod_{i=1}^{k}\frac{s^+(C_i)s^-(C_i)}{s^+(C'_i)s^-(C'_i)}
\end{align}
 where $s^+(C_i)=\M(S_{u_i,f_i-u_i}(U_i))$ and $s^-(C_i)=\M(S_{d_i,f_i-d_i}(D_i))$ are respectively the tiling numbers of the dented semihexagons whose dents are defined by the up-pointing triangles and down-pointing triangles in the cluster $C_i$, and where $s^+(C'_i)$ and $s^-(C'_i)$ are defined similarly with respect to $C'_i$.
\end{cor}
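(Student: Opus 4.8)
\emph{Proof sketch.} The plan is to read off Corollary \ref{cordual} as a limit of the exact identity provided by the Shuffling Theorem \ref{factorization}. Write $R_N:=H_{Nx,Ny}(C_1,\dots,C_k;Nd_1,\dots,Nd_{k-1})$ and $R'_N:=H_{Nx,Ny}(C'_1,\dots,C'_k;Nd_1,\dots,Nd_{k-1})$. By construction $R_N$ is a doubly-dented hexagon $H_{Nx,Ny}(U;D)$ in the sense of Section 2, where the removed up- and down-pointing triangles occupy the position sets $U=U^{(1)}\cup\dots\cup U^{(k)}$ and $D=D^{(1)}\cup\dots\cup D^{(k)}$, with $U^{(m)}$ and $D^{(m)}$ obtained from the local position sets $U_m,D_m\subseteq[f_m]$ of $C_m$ by adding the offset $o_m^{(N)}:=\sum_{l<m}f_l+N\sum_{l<m}d_l$. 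Replacing each $C_m$ by its shuffle $C'_m$ leaves $U_m\cup D_m$, $U_m\cap D_m$, $f_m$, and the cardinalities $u_m=|U_m|$, $d_m=|D_m|$ unchanged, so $R'_N=H_{Nx,Ny}(U';D')$ with $U\cup D=U'\cup D'$, $U\cap D=U'\cap D'$, and the \emph{same} offsets $o_m^{(N)}$. Thus Theorem \ref{factorization} applies and gives, for every $N$,
\[
\frac{\M(R_N)}{\M(R'_N)}=\prod_{1\le i<j\le u}\frac{s_j-s_i}{s'_j-s'_i}\ \prod_{1\le i<j\le d}\frac{t_j-t_i}{t'_j-t'_i},
\]
where $s_1<\dots<s_u$ (resp. $t_1<\dots<t_d$) lists the global positions of the removed up-triangles (resp. down-triangles) of $C_1,\dots,C_k$, the lists $s'_\bullet,t'_\bullet$ are the analogues for $C'_1,\dots,C'_k$, and $u=\sum_iu_i$, $d=\sum_id_i$.

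Next I would split each of these two products according to whether the triangles indexed by $i$ and $j$ lie in the same cluster. Because the clusters are pairwise separated and listed from left to right, the up-triangles of $C_m$ form a consecutive block in the list $s_1<\dots<s_u$, and that block, minus the common offset $o_m^{(N)}$, is precisely the sorted local position set of $U_m$. Hence the within-$C_m$ pairs contribute to the first product
\[
\prod_{1\le p<q\le u_m}\frac{\sigma^{(m)}_q-\sigma^{(m)}_p}{{\sigma'}^{(m)}_q-{\sigma'}^{(m)}_p}
=\frac{\M(S_{u_m,\,f_m-u_m}(U_m))}{\M(S_{u_m,\,f_m-u_m}(U'_m))}=\frac{s^+(C_m)}{s^+(C'_m)},
\]
where $\sigma^{(m)}_\bullet$ and ${\sigma'}^{(m)}_\bullet$ enumerate the local up-positions of $C_m$ and $C'_m$: the offset cancels in every difference, the middle equality is Cohn--Larsen--Propp's formula \eqref{CLPeq}, and the telescoping denominators $\prod_{p<q}(q-p)$ cancel between numerator and denominator because $|U_m|=|U'_m|$. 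The same computation for the down-triangles yields $s^-(C_m)/s^-(C'_m)$. All of these within-cluster factors are constants, independent of $N$.

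It remains to show the cross-cluster factors wash out in the limit. If $s_i$ lies in $C_m$ and $s_j$ in $C_{m'}$ with $m<m'$, then $s_j-s_i=N\sum_{l=m}^{m'-1}d_l+c$ and $s'_j-s'_i=N\sum_{l=m}^{m'-1}d_l+c'$ for constants $c,c'$ depending on $i,j$ but not on $N$; since $\sum_{l=m}^{m'-1}d_l>0$, it follows that $\frac{s_j-s_i}{s'_j-s'_i}\to1$ as $N\to\infty$, and likewise for the cross-cluster down-triangle factors. As there are only finitely many such factors, their product tends to $1$. Combining the three contributions,
\[
\lim_{N\to\infty}\frac{\M(R_N)}{\M(R'_N)}=\prod_{i=1}^{k}\frac{s^+(C_i)}{s^+(C'_i)}\cdot\prod_{i=1}^{k}\frac{s^-(C_i)}{s^-(C'_i)}=\prod_{i=1}^{k}\frac{s^+(C_i)\,s^-(C_i)}{s^+(C'_i)\,s^-(C'_i)},
\]
which is \eqref{dualeq}.

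I do not expect a genuine obstacle: once the Shuffling Theorem is available, the corollary is a short limit of an exact identity. The step needing the most care is the bookkeeping in the second paragraph — verifying that restricting the global product to a single cluster and relabeling by local indices produces, for both $R_N$ and $R'_N$, the \emph{same} telescoping factor $\prod_{1\le p<q\le u_m}(q-p)$ (respectively $\prod_{1\le p<q\le d_m}(q-p)$), so that it cancels. One should also note at the start that $R'_N$ is tileable for all large $N$, so that the ratio is well defined; this holds since each $C'_i$ is an admissible cluster and, for $N$ large, the clusters lie far apart inside a large hexagon. Finally, one sees why only within-cluster shuffles (rather than the more general flips of Theorem \ref{genfactorization}) appear here: the $\PP$-factors introduced by a genuine flip would not cancel in the above analysis, and the limit would generically diverge.
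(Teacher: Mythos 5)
Your proposal is correct and follows essentially the same route as the paper: apply the Shuffling Theorem \ref{factorization} to the exact ratio, identify the within-cluster factors with the Cohn--Larsen--Propp numbers $s^{\pm}(C_i)/s^{\pm}(C'_i)$, and show the cross-cluster factors tend to $1$ as $N\to\infty$. The only cosmetic difference is that the paper organizes the limit via the squared Vandermonde $\Delta(S)^2=\prod_{i\neq j}|s_j-s_i|$, whereas you track the ordered pairwise differences and the block structure of the clusters directly.
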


Corollary \ref{cordual} can be visualized as in Fig. \ref{geointer}, for $k=3$. The dented semihexagons corresponding to $s^+(C_i)$ and $s^-(C_i)$ are the upper and lower halves of the `numerator hexagon' in the $i$th fraction on the right-hand side; the dented semihexagons corresponding to $s^+(C'_i)$ and $s^-(C'_i)$ are the upper and lower halves of the `denominator hexagon' in the $i$th fraction, for $i=1,2,3$.

\begin{figure}\centering
\includegraphics[width=14cm]{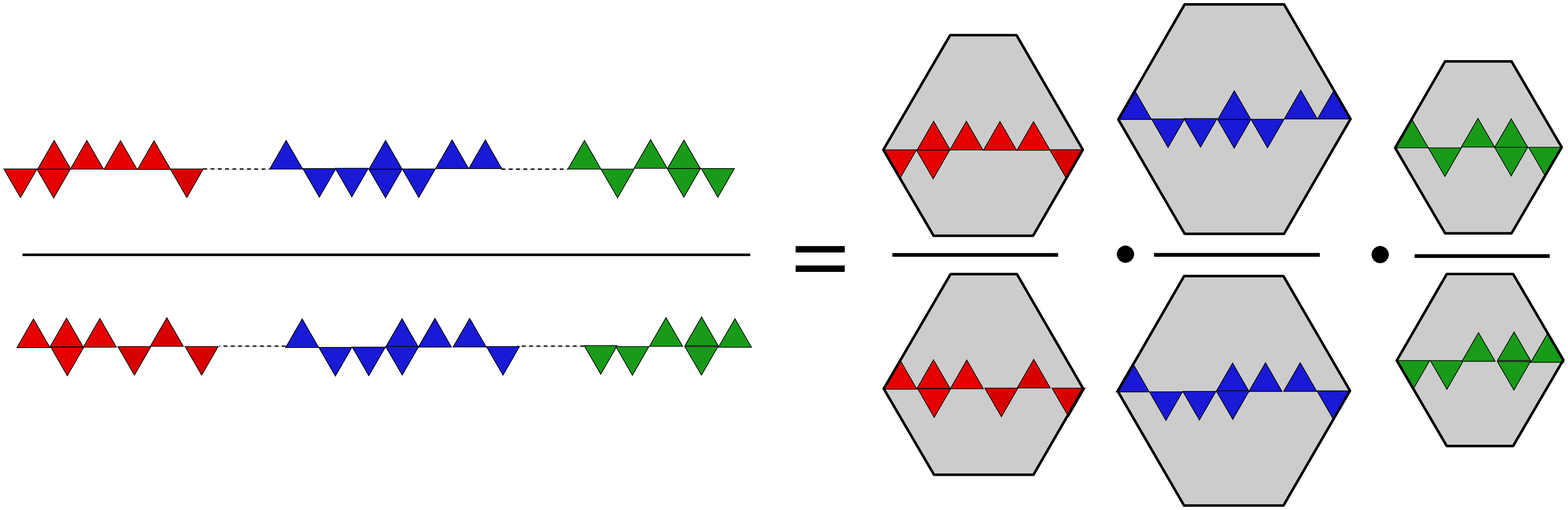}
\caption{Illustrating Corollary \ref{cordual}.}\label{geointer}
\end{figure}

\begin{proof}[Sketch of the proof]
Assume that $U_i=\{a^{(i)}_1,\dots,a^{(i)}_{u_i}\}$ and 
 $U'_i=\{e^{(i)}_1,\dots,e^{(i)}_{u_i}\}$. 
  Applying Theorem \ref{factorization} to the regions $H_{Nx,Ny}(U; D)$ and $H_{Nx,Ny}(U'; D')$, for $U:=\bigcup_iU_i$, $D:=\bigcup_jD_j$, $U':=\bigcup_iU'_i$, and $D':=\bigcup_jD'_j$, we get
\begin{equation}
\frac{\M(H_{Nx,Ny}(C_1,\dots,C_k; Nd_1,\dots,Nd_{k-1}))}{\M(H_{Nx,Ny}(C'_1,\dots,C'_k; Nd_1,\dots,Nd_{k-1}))}=\frac{\Delta(U)\Delta(D)}{\Delta(U')\Delta(D')},
\end{equation}
where the operation $\Delta$ is defined as $\Delta(S):=\prod_{1\leq i<j \leq n}(s_j-s_i)$ for an ordered set $S=\{s_1<s_2<\cdots<s_n\}$. However, we find it more convenient to observe that $\Delta(S)^2=\prod_{1\leq i\not=j \leq n}|s_j-s_i|$ here. 

We have $\frac{\Delta(U)^2}{\Delta(U')^2}=\prod_{i,j}\prod_{p,q}\frac{|a^{(i)}_p-a^{(j)}_q|}{|e^{(i)}_p-e^{(j)}_q|}$, where $p\not=q$ if $i=j$. It is easy to see that if $i\not=j$ the fraction $\frac{|a^{(i)}_p-a^{(j)}_q|}{|e^{(i)}_p-e^{(j)}_q|}$ tends to $1$, as $N$ gets large (for any $p,q$). Thus $\frac{\Delta(U)^2}{\Delta(U')^2}$ tends to $\prod_{i=1}^{k}\prod_{p\not=q}\frac{|a^{(i)}_p-a^{(i)}_q|}{|e^{(i)}_p-e^{(i)}_q|}=\prod_{i=1}^{k}\frac{\Delta(U_i)^2}{\Delta(U'_i)^2}$.

  Moreover, $\frac{\Delta(U_i)}{\Delta(U'_i)} = \frac{s^{+}(C_i)}{s^{+}(C'_i)}$. Thus, $\frac{\Delta(U)}{\Delta(U')}$ tends to $\prod_{i=1}^{k}\frac{s^+(C_i)}{s^+(C'_i)}$. Similarly, we see that $\frac{\Delta(D)}{\Delta(D')}$ tends to $\prod_{i=1}^{k}\frac{s^-(C_i)}{s^-(C'_i)}$, completing the proof.
\end{proof}

By the same arguments, one would imply a $q$-analog of the above asymptotic result from   $q$-Shuffling Theorem \ref{qfactorization}.

\medskip

We now consider the special case in which $U\cap D=\emptyset$ (i.e. in each cluster $C_i$ we have $U_i \cap D_i=\emptyset$). Each cluster $C_i$ can be partitioned into maximal intervals of triangles of the same orientation (we call each of these intervals an `\emph{up-interval}' or a `\emph{down-interval}' if it consists of up-pointing triangles or down-pointing triangles, respectively). For each cluster $C_i$, we can remove forced vertical lozenges above each up-interval and below each down-interval composed of two or more unit triangles. We obtain a new region with the same tiling number in which each cluster is replaced by a chain of removed equilateral triangles of alternating orientations (see Fig. \ref{fern}; the forced lozenges are colored white). Each such chain of triangles is called a `\emph{fern}' (see e.g. \cite{Ciu1,HoleDent}); the side-lengths of triangles in a fern are equal to the lengths of the intervals of unit triangles of the same orientation in the corresponding cluster. Denote by $E_{x,y}(F_1,\dotsc,F_k;d_1,\dots,d_{k-1})$ the corresponding hexagon with ferns removed (the fern $F_i$ corresponds to the cluster $C_i$; and the fern $F'_i$ corresponds to the cluster $C'_i$). By setting $k=3$, $d_1=d_2$ or $d_1=d_2-1$, and specifying that the cluster $C'_i$ has all its up-pointing triangles on the left and all its down-pointing triangles on the right, for $i=1,2,3$ (equivalently, the fern $F'_i$ consists of two triangles, an up-pointing one followed by a down-pointing one), our Corollary \ref{cordual} implies the first author's work in \cite[Theorem 2.11]{HoleDent}. Similarly, we can recover  the work of Ciucu in \cite[Theorem 1.1]{Ciu1} by further specifying that  $C_1=C_3=\emptyset$ (i.e. we actually have only a non-empty fern $F_2$ in the center of the region). 

\begin{figure}\centering
\includegraphics[width=7cm]{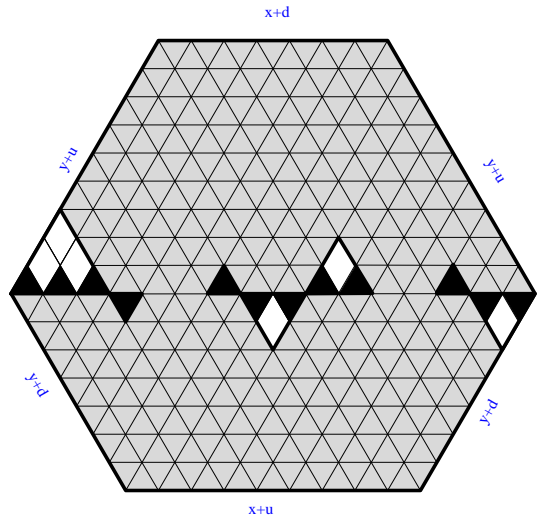}
\caption{Obtained a hexagon with ferns removed from the region $H_{x,y}(C_1,\dots,C_k; d_1,\dots,d_{k-1})$.}\label{fern}
\end{figure}

\section{Proof of the main theorem}
In general, the lozenges in a region can carry weights. In the weighted case, $\M(R)$ denotes the sum of weights of the tilings in $R$, where the \emph{weight} of  a tiling is the product of weights of its constituent lozenges. The removal of one or more forced lozenges changes the number of tilings of the region by a factor equal to the reciprocal of the weighted product of the forced lozenges. In particular, if we remove the lozenges $l_1,l_2,\dots,l_k$ from a region $R$ and get a new region $R'$, then
\begin{equation}
\M(R')=\left(\prod_{i=1}^{k}wt(l_i)\right)^{-1}\M(R),
\end{equation}
where $wt(l_i)$ is the weight of the forced lozenge $l_i$. 

A \emph{(perfect) matching} of a graph is a collection of disjoint edges that covers all the vertices of the graph. When edges of a graph carry weights, we denote by $\M(G)$ the weighted sum of matchings in $G$, where the weight of a matching is the product of weights of its edges. (In the unweighted case, $\M(G)$ is exactly the number of matchings of $G$.)  The \emph{(planar) dual graph} of the region $R$ on the triangular lattice is the graph whose vertices are the unit triangles in $R$ and whose edges connect precisely two unit triangles sharing an edge. The edges of the dual graph inherit the weights from the lozenges of the corresponding region. There is a natural (weight-preserving) bijection between tilings of a region and matchings of its dual graph.

Our proof is based on the following powerful graphical condensation lemma first introduced by Kuo \cite{Kuo}:
\begin{lem}\label{kuothm}
Let $G=(V_1,V_2,E)$ be a (weighted) planar bipartite graph with the two vertex classes $V_1$ and $V_2$ such that $|V_1|=|V_2|+1$. Assume that $u,v,w,s$ are four vertices appearing in this cyclic order around a face of $G$, such that $u,v,w\in V_1$ and $s\in V_2$. Then
\begin{align}
\M(G-v)&\M(G-\{u,w,s\})=\M(G-u)\M(G-\{v,w,s\})+\M(G-w)\M(G-\{u,v,s\}).
\end{align}
\end{lem}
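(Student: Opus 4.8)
The plan is to prove the identity by a weight-preserving superposition bijection. Abbreviate $A:=\M(G-v)\,\M(G-\{u,w,s\})$, $B:=\M(G-u)\,\M(G-\{v,w,s\})$, and $C:=\M(G-w)\,\M(G-\{u,v,s\})$; I read $A$ as the weighted count of ordered pairs $(M_1,M_2)$ where $M_1$ is a perfect matching of $G-v$ and $M_2$ a perfect matching of $G-\{u,w,s\}$, with weight $\wt(M_1)\wt(M_2)$, and similarly for $B$ and $C$. The hypotheses $|V_1|=|V_2|+1$, $\{u,v,w\}\subseteq V_1$, $s\in V_2$ make all six graphs that occur balanced, so these are the counts that can be nonzero. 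The goal is a weight-preserving bijection between the pairs counted by $A$ and the disjoint union of those counted by $B$ and by $C$; since any such bijection will only redistribute the edges of $M_1\cup M_2$ between two matchings, weight preservation is automatic.

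First I would study the multiset union $M_1\cup M_2$ of a pair counted by $A$. As $M_1$ misses exactly $v$, $M_2$ misses exactly $u,w,s$, and these vertex sets are disjoint, every vertex of $G$ has degree $1$ or $2$ in $M_1\cup M_2$, the degree-$1$ vertices being precisely $u,v,w,s$. Thus $M_1\cup M_2$ is a disjoint union of doubled edges, alternating cycles, and exactly two alternating paths whose four endpoints are $u,v,w,s$. Recording which matching owns the edge at each endpoint ($M_1$ at $u,w,s$, $M_2$ at $v$) and using that in a bipartite graph an alternating path between two vertices of the same colour has even length (so its two extreme edges lie in different matchings), one checks that the endpoints can be paired into two paths only as $\{u,v\}$ together with $\{w,s\}$, or as $\{v,w\}$ together with $\{u,s\}$. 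This partitions the pairs counted by $A$ into two classes $A_{uv}$ and $A_{vw}$.

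Next comes the switching map. Given $(M_1,M_2)\in A_{uv}$, let $\pi$ be the $u$--$v$ path in $M_1\cup M_2$ and set $N_1:=(M_1\setminus\pi)\cup(M_2\cap\pi)$ and $N_2:=(M_2\setminus\pi)\cup(M_1\cap\pi)$; checking incidences along and off $\pi$ shows $N_1$ misses only $u$ and $N_2$ misses only $v,w,s$, so $(N_1,N_2)$ is a pair counted by $B$, and since $N_1\cup N_2=M_1\cup M_2$ as multisets the operation is reversed by switching again along the same path. An identical switch along the $v$--$w$ path sends $A_{vw}$ into the pairs counted by $C$. To see these maps are onto I would rerun the superposition analysis for a pair $(N_1,N_2)$ counted by $B$: bipartiteness now permits the endpoint pairings $\{u,v\}$ with $\{w,s\}$ and $\{u,w\}$ with $\{v,s\}$, and for a pair counted by $C$ it permits $\{v,w\}$ with $\{u,s\}$ and $\{u,w\}$ with $\{v,s\}$. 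This is exactly where planarity and the cyclic order around the common face enter: since $u,v,w,s$ lie on that face in this order, $u$ and $w$ separate $v$ and $s$ on it, so $G$ contains no pair of vertex-disjoint paths joining $u$ to $w$ and $v$ to $s$; hence the ``crossing'' pairing $\{u,w\}$ with $\{v,s\}$ cannot occur. Therefore every pair counted by $B$ is the image of a pair in $A_{uv}$ and every pair counted by $C$ the image of one in $A_{vw}$, giving weight-preserving bijections $A_{uv}\leftrightarrow B$ and $A_{vw}\leftrightarrow C$, and hence $A=B+C$.

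I expect the one genuinely delicate point to be the planarity step — the claim that vertex-disjoint $u$--$w$ and $v$--$s$ paths cannot coexist when $u,v,w,s$ occur in this cyclic order on a face. Taking that face to be the outer face of a plane drawing of $G$, a simple $u$--$w$ path is an arc in the closed disc which, together with the boundary arc from $u$ to $w$ avoiding $v$ and $s$, bounds a region containing $v$ but not $s$ (or vice versa), so any $v$--$s$ path must meet it; the remaining care is just in the drawing genericity and the boundary intersections. Every other ingredient — the parity count, the bipartite case analysis, and the routine verification that switching outputs the stated matchings — is mechanical. One could instead derive the lemma from the Desnanot--Jacobi determinant identity applied to a Kasteleyn--Percus matrix of $G$, with the cyclic-order hypothesis ensuring sign coherence, but the bijective argument above is self-contained and shorter.
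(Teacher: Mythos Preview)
Your argument is correct; it is the standard superposition/path-switching proof of Kuo's condensation identity, and all the case checks (the bipartite parity analysis of which endpoint pairings are possible for $A$, $B$, $C$, and the Jordan-curve step ruling out the crossing pairing $\{u,w\},\{v,s\}$) go through as you indicate.

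There is nothing to compare against, however: the paper does not prove this lemma. It is quoted from Kuo's paper \cite{Kuo} and used as a black box in the proof of Theorem~\ref{qfactorization}. What you have written is essentially Kuo's own proof, so if anything you have supplied the argument the paper deliberately outsourced.
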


\medskip

If a region admits a lozenge tiling, then it must have the same number of up-pointing and down-pointing unit triangles. We call such a region \emph{balanced}. The following lemma allows us to decompose a region into smaller regions when enumerating tilings in certain situations.

\begin{lem}[Region-splitting Lemma \cite{Tri1,Tri2}]\label{RS}
Let $R$ be a balanced region on the triangular lattice. Assume that a balanced sub-region $Q$ of $R$ satisfies the condition that the unit triangles in $Q$ that are adjacent to some unit triangle of $R-Q$ have the same orientation.
Then $\M(R)=\M(Q)\, \M(R-Q).$
\end{lem}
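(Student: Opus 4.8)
The final statement to be proved is the Region-splitting Lemma (Lemma~\ref{RS}). I will outline how I would establish it.

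\medskip

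The plan is to argue that under the stated hypothesis, every tiling of $R$ restricts to a tiling of $Q$ together with a tiling of $R-Q$, and conversely any pair of such tilings glues to a tiling of $R$; the multiplicativity of weights then gives $\M(R)=\M(Q)\,\M(R-Q)$. First I would observe that since $Q$ and $R$ are both balanced, so is $R-Q$, so all three regions have the same number of up- and down-pointing unit triangles and it makes sense to speak of their tilings. The key point is to show that \emph{no lozenge of a tiling of $R$ can straddle the boundary between $Q$ and $R-Q$}. A lozenge covers one up-pointing and one down-pointing unit triangle sharing an edge; if such a lozenge had one triangle in $Q$ and the other in $R-Q$, then the triangle lying in $Q$ is adjacent to a triangle of $R-Q$, hence by hypothesis it has the prescribed fixed orientation -- say all such boundary triangles of $Q$ are up-pointing (the down-pointing case is symmetric). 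Then every lozenge of the tiling that meets both $Q$ and $R-Q$ would cover an up-pointing triangle of $Q$ and a down-pointing triangle of $R-Q$.

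\medskip

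To rule this out I would use a counting argument on the boundary triangles. Let $Q$ have $p$ up-pointing and $p$ down-pointing triangles (balanced), and suppose for contradiction that in some tiling $T$ of $R$ exactly $m>0$ lozenges straddle the $Q$/$(R-Q)$ boundary. Each such straddling lozenge, by the orientation hypothesis, covers an up-pointing triangle of $Q$ and leaves its partner outside $Q$; all remaining $p-m$ up-pointing triangles of $Q$ are matched within $Q$, consuming $p-m$ down-pointing triangles of $Q$; the lozenges internal to $Q$ use pairs within $Q$, so they account for $p-m$ down-pointing triangles, leaving $m$ down-pointing triangles of $Q$ that must be matched by lozenges going \emph{into} $R-Q$. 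But a lozenge matching a down-pointing triangle of $Q$ to something in $R-Q$ would cover a down-pointing triangle of $Q$ adjacent to $R-Q$, contradicting the hypothesis that every boundary triangle of $Q$ is up-pointing. Hence $m=0$: the restriction of $T$ to the triangles of $Q$ is a perfect matching of $Q$ (a tiling of $Q$), and the restriction to $R-Q$ is a tiling of $R-Q$. Conversely, given a tiling of $Q$ and a tiling of $R-Q$, their union is a collection of lozenges covering every unit triangle of $R$ exactly once, i.e.\ a tiling of $R$; no conflict can arise since $Q$ and $R-Q$ partition the triangles of $R$.

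\medskip

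This sets up a weight-preserving bijection between tilings of $R$ and pairs (tiling of $Q$, tiling of $R-Q$): the weight of a tiling of $R$ is the product over its lozenges, which splits as (product over lozenges inside $Q$)$\cdot$(product over lozenges inside $R-Q$), exactly the product of the weights of the two pieces. Summing over all tilings gives $\M(R)=\M(Q)\,\M(R-Q)$, as desired. The main obstacle, and the only step requiring care, is the no-straddling claim: one must invoke balancedness of $Q$ in just the right way, because without it one could imagine a tiling in which some up-pointing triangles of $Q$ are matched outward while some down-pointing triangles of $Q$ are matched inward from $R-Q$ through non-boundary-of-$Q$ triangles -- the orientation hypothesis on \emph{all} adjacent boundary triangles of $Q$, combined with the equality of the two triangle counts in $Q$, is precisely what forecloses this. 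Everything else is bookkeeping.
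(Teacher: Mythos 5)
Your proof is correct and takes essentially the same route as the standard argument for this lemma (which the paper itself only quotes from \cite{Tri1,Tri2} without reproving): the orientation hypothesis on the triangles of $Q$ adjacent to $R-Q$, combined with the balancedness of $Q$, forces every lozenge of a tiling of $R$ to lie entirely within $Q$ or within $R-Q$, yielding a weight-preserving bijection between tilings of $R$ and pairs of tilings of $Q$ and $R-Q$. Your counting argument ruling out straddling lozenges is exactly the needed step, so nothing further is required.
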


Given $k$ positive integers $\lambda_1\geq \lambda_2\geq \dots \geq \lambda_k$, a \emph{plane partition} of shape $(\lambda_1,\lambda_2,\dots,\lambda_k)$ is an array of non-negative integers
\begin{center}
\begin{tabular}{rccccccccc}
$n_{1,1}$   &$n_{1,2}$                 &$n_{1,3}$               & $\dotsc$               &  $\dotsc$                        & $\dotsc$                            &   $n_{1,\lambda_1}$ \\\noalign{\smallskip\smallskip}
$n_{2,1}$   &  $n_{2,2}$              & $n_{2,3}$             &  $\dotsc$               & $\dotsc$                        &         $n_{2,\lambda_2}$&          \\\noalign{\smallskip\smallskip}
$\vdots$    &       $\vdots$            & $\vdots$                &        $\vdots$         &     \reflectbox{$\ddots$\quad}               &    &              \\\noalign{\smallskip\smallskip}
 $n_{k,1}$  &  $n_{k,2}$               & $n_{k,3}$              &     $\dotsc$             &   $n_{k,\lambda_k}$ &                                          &           \\\noalign{\smallskip\smallskip}
\end{tabular}
\end{center}
so that $n_{i,j}\geq n_{i,j+1}$ and $n_{i,j}\geq n_{i+1,j}$ (i.e. all rows and all columns are weakly decreasing from left to right and from top to bottom, respectively). The sum of all entries of a plane partition $\pi$ is called the \emph{volume} (or the \emph{norm}) of the plane partition, and denoted by $|\pi|$.

A \emph{column-strict
plane partition} is a plane partition having columns strictly decreasing. A \emph{column-strict plane partition} is a plane partition having columns strictly decreasing. We now assign to each right-tilting lozenge in the dented semihexagon similarly to the upper half of the doubly dented hexagon. In particular, a eight-tilting lozenge is weighted by $q^{z}$, where the distance between its center and the base of the region is $(z-\frac{1}{2})\frac{\sqrt{3}}{2}$.  Denote $\M_q(S_{a,b}(s_1,s_2,\dots,s_a))$ the corresponding tiling generating function of the dented semihexagon $S_{a,b}(s_1,s_2,\dots,s_a)$.There is a well-known (weight preserving) bijection between the lozenge tilings of $S_{a,b}(s_1,s_2,\dots,s_a)$ and the column-strict plane partitions of shape $(s_{a}-a,s_{a-1}-a+1,\dotsc,s_1-1)$ with positive entries at most $a$ (see e.g. \cite{CLP} and \cite{Car}). The following weighted enumeration of the dented semihexagon follows directly from the bijection and equation (7.105) in \cite[pp. 375]{Stanley}.

\begin{lem}\label{qCLP} For nonnegative $a,b$ and positive integers $1\leq s_1<s_2<\cdots<s_a$
\begin{equation}
\M_q(S_{a,b}(s_1,s_2,\dots,s_a))=\sum_{\pi}q^{|\pi|}=q^{\sum_{i=1}^a(s_i-i)}\prod_{1\leq i <j \leq q}\frac{q^{s_j}-q^{s_i}}{q^{j}-q^{i}},
\end{equation}
where the sum after the first equality sign is taken over all column-strict plane partitions $\pi$ of shape $(s_{a}-a,s_{a-1}-a+1,\dotsc,s_1-1)$ with positive entries at most $a$.
\end{lem}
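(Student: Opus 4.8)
The plan is to reduce the statement to a known symmetric-function identity via the bijection already recalled in the text. First I would make the bijection between lozenge tilings of $S_{a,b}(s_1,\dots,s_a)$ and column-strict plane partitions $\pi$ of shape $\lambda=(s_a-a,\,s_{a-1}-a+1,\,\dots,\,s_1-1)$ with entries in $\{1,\dots,a\}$ completely explicit, so that one can track how the $q$-weight on right-tilting lozenges translates into a statistic on $\pi$. Recall that a right-tilting lozenge whose center sits at height $(z-\tfrac12)\tfrac{\sqrt3}{2}$ above the base carries weight $q^z$; summing over a tiling, the exponent collects, row by row, the heights at which right-tilting lozenges occur, and under the standard correspondence (see \cite{CLP,Car}) these heights are exactly the values recorded in $\pi$. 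The key bookkeeping step is therefore to verify that $\wt(\text{tiling})=q^{|\pi|}$ on the nose — I expect a shift by $\sum_i(s_i-i)$ will appear naturally here, coming from the fact that the ``empty'' column-strict filling (the one with the $i$th column equal to $a,a-1,\dots$) already forces a nonzero height for the bottom layer of right-tilting lozenges; this accounts for the prefactor $q^{\sum_{i=1}^a (s_i-i)}$ in the claimed formula.

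Once the identification $\M_q(S_{a,b}(s_1,\dots,s_a))=q^{\sum_i(s_i-i)}\sum_\pi q^{|\pi'|}$ is in place, where $\pi'$ ranges over column-strict plane partitions of shape $\lambda$ with entries at most $a$ and $|\pi'|$ is normalized so that the minimal filling has volume $0$, I would invoke the generating-function formula for column-strict plane partitions. This is precisely a principal specialization of a Schur function: $\sum_{\pi'} q^{|\pi'|}$ equals (up to the normalization) $s_\lambda(1,q,q^2,\dots,q^{a-1})$, and equation (7.105) in \cite[p.~375]{Stanley} evaluates this principal specialization in closed product form. Carrying out that substitution and simplifying the resulting product of $q$-integers yields $\prod_{1\le i<j\le a}\frac{q^{s_j}-q^{s_i}}{q^{j}-q^{i}}$; here one uses that $\lambda_i+a-i = s_{a-i+1}-1$, so the ``content-type'' factors $q^{\lambda_i+a-i}-q^{\lambda_j+a-j}$ in the Weyl-dimension-formula shape of (7.105) become differences $q^{s_p}-q^{s_r}$, and the denominator $\prod_{i<j}(q^j-q^i)$ is exactly the Vandermonde $\prod(q^{a-i}-q^{a-j})$ reindexed.

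The main obstacle, as usual in these lozenge-tiling arguments, is the weight-tracking in the first step: one must be careful that the height statistic on right-tilting lozenges — and not on left-tilting ones or vertical ones — is what gets summed, and that the affine shift between ``height above the base'' and ``entry of $\pi$'' is pinned down correctly, including the half-integer offset $z-\tfrac12$ in the definition of the weight. Getting the exponent of the prefactor right (it should be $\sum_{i=1}^a(s_i-i)$, the volume of the minimal column-strict filling of shape $\lambda$, which also equals $|\lambda| - \binom{a}{2} + \cdots$ after the reindexing $\lambda_i = s_{a-i+1}-a+i-1$) is the delicate point; once that is settled, the rest is the routine algebraic manipulation of \cite[(7.105)]{Stanley} that I have sketched and would not belabor. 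I would also note for the record that letting $q\to 1$ recovers Cohn--Larsen--Propp's formula \eqref{CLPeq}, which serves as a useful sanity check on the normalization.
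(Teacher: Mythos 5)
Your proposal follows exactly the paper's own route: the paper derives Lemma \ref{qCLP} precisely from the weight-preserving bijection with column-strict plane partitions of shape $(s_a-a,\dots,s_1-1)$ (citing \cite{CLP,Car}) together with the principal specialization formula (7.105) of Stanley, which is the same two-step argument you sketch, including the key reindexing $\lambda_i+a-i=s_{a-i+1}-1$. One small correction to your bookkeeping: the prefactor exponent $\sum_{i=1}^a(s_i-i)$ is just $|\lambda|$, coming from homogeneity, $s_\lambda(q,q^2,\dots,q^a)=q^{|\lambda|}s_\lambda(1,q,\dots,q^{a-1})$ (equivalently, every entry of $\pi$ is at least $1$), and is not the volume of the minimal column-strict filling, which is $\sum_j\binom{\lambda'_j+1}{2}$ and is larger whenever some column has length at least $2$.
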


\begin{proof}[Proof of Theorem \ref{qfactorization}]
Denote by $g_{x,y}(U;D;U';D')$ the right-hand side of (\ref{qmianeq})\footnote{As the right-hand side of (\ref{qmianeq}) does not depend on the index $B$, our $g$-function does not depend on $B$.}, i.e.
\begin{equation}
g_{x,y}(U;D;U';D')=q^C \frac{\displaystyle\prod_{1\leq i <j\leq u}\frac{q^{s_j}-q^{s_i}}{q^{j}-q^{i}}\displaystyle\prod_{1\leq i <j\leq d}\frac{q^{t_j}-q^{t_i}}{j-i}\PP_q(u,d,y)}{\displaystyle\prod_{1\leq i <j\leq u'}\frac{q^{s'_j}-q^{s'_i}}{q^{j}-q^{i}}\displaystyle\prod_{1\leq i <j\leq d'}\frac{q^{t'_j}-q^{t'_i}}{q^{j}-q^{i}}\PP_q(u'd',y)},
\end{equation}
we need to show that
\begin{equation}\label{refineeq}
\M_q(H_{x,y}(U;D;B))=g_{x,y}(U;D;U';D')\M_q(H_{x,y}(U';D';B)).
\end{equation}

\medskip

We prove (\ref{refineeq}) by induction on $x+y$. The base cases are the situations when $x=b$ and when $y=0$.

If $y=0$, then we apply Region-splitting Lemma \ref{RS} to the region $R=H_{x,0}(U;D;B)$ with the subregion $Q$ the portion above the horizontal axis $l$. The subregion $Q$ is congruent with the dented semihexagons $S_{u,x+n-u}(U)$ weighted as in Lemma \ref{qCLP}, and  the  $R-Q$, after rotated $180^{\circ}$, is congruent with $S_{d,x+n-d}(r(D))$ weighted similarly with $q$ replaced by $q^{-1}$ (see Fig. \ref{basecase} A). Here we use the notation $r(S)$ for the reflection of the index set $S$, i.e. the set $(x+y+n+1)-S=\{(x+y+n+1)-s: s \in S\}.$
We have
\begin{align}
\M_q(H_{x,0}(U;D;B))=\M_q(S_{u,x+n-u}(U))\M_{q^{-1}}(S_{d,x+n-d}(r(D))).
\end{align}
 Similarly, the tiling number of $R'=H_{x,0}(U';D';B)$ is also written by a product of tiling numbers of two dented semihexagons as
 \begin{align}
\M_q(H_{x,0}(U';D';B))=\M_q(S_{u,x+n-u}(U'))\M_{q^{-1}}(S_{d,x+n-d}(r(D'))),
\end{align}
and (\ref{refineeq}) follows from Lemma \ref{qCLP}.
 
If $x=b$, consider the subregion $Q$ of  $R=H_{b,y}(U;D;B)$ that is obtained from the portion above the axis $l$ by removing all up-pointing unit triangles in $(U\cup D\cup B)^c$ (we note that in this case $|(U\cup D\cup B)^c|=y$). $Q$ is the dented semihexagon $S_{y+u,b+n-u}((U\cup D\cup B)^c\cup U)$ weighted as in Lemma \ref{qCLP}, and its complement, after removing forced lozenges at the positions in $(U\cup D\cup B)^c$ and rotating $180^{\circ}$, is the dented semihexagon  $S_{y+d,b+n-u}((U\cup D\cup B)^c\cup D)$ weighted similarly with $q$ replaced by $q^{-1}$ (see Fig. \ref{basecase} B). This way, the tiling number of $R$ is written as the product of tiling numbers of two dented semihexagons. Similarly, the tiling number of $R'=H_{b,y}(U';D';B)$ is also written by a product of tiling numbers of two dented semihexagons. Then (\ref{refineeq}) also follows from  Lemma \ref{qCLP}.

For the induction step, we assume that $x>b$, $y>0$, and that (\ref{refineeq}) holds for any $H$-type regions whose sum of $x$- and $y$-parameters is strictly less than $x+y$. We will use Kuo condensation in Theorem \ref{kuothm} to obtain a recurrence for the tiling generating function on the left-hand side of (\ref{refineeq}), and we show that the expression on the right-hand side satisfies the same equation. Then (\ref{refineeq})  follows from the induction principle.

We apply Kuo condensation to the \emph{dual graph} $G$ of the region $R$ obtained from $H_{x,y}(U;D;B)$ by adding a layer of unit triangle on the top of the hexagon, with the four vertices $u,v,w,s$ as in Fig.  \ref{hexkuo1} (the region restricted by the bold contour indicates the region $H_{x,y}(U;D;B)$). In particular, the vertices $w$ and $v$ correspond to the up-pointing triangles at the first and the last positions in $(U\cup D\cup B)^c$, the vertex $v$ corresponds to the up-pointing triangle on the northeast corner of the region, and the vertex $s$ corresponds to the down-pointing triangle on the southeast corner of the region.

\begin{figure}\centering
\includegraphics[width=12cm]{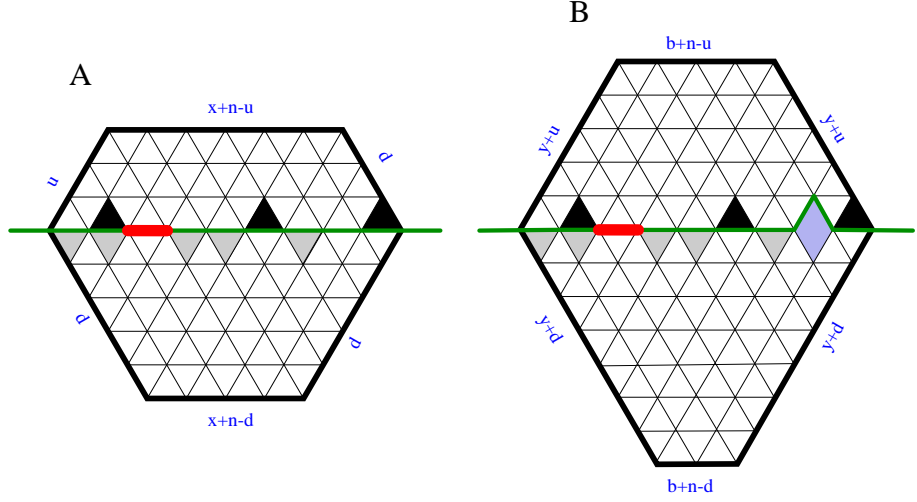}
\caption{Applications of the Region-splitting Lemma \ref{RS} in the base cases: (A) $y=0$, (B) $x=b$.}\label{basecase}
\end{figure}
\begin{figure}\centering
\includegraphics[width=8cm]{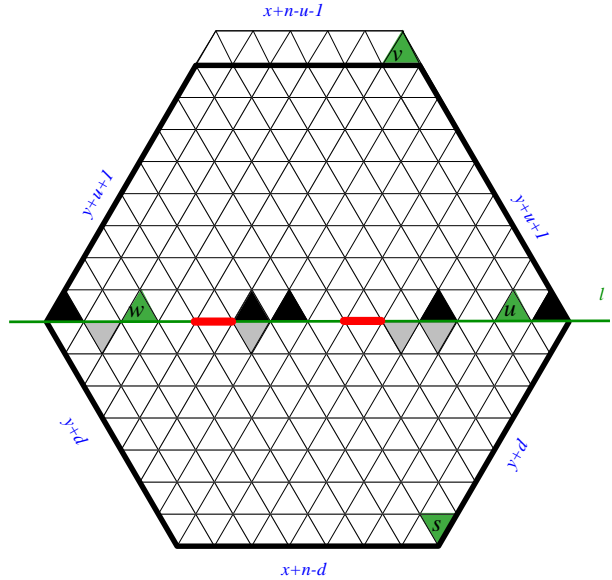}
\caption{Applying Kuo condensation to a hexagon.}\label{hexkuo1}
\end{figure}
\begin{figure}\centering
\includegraphics[width=12cm]{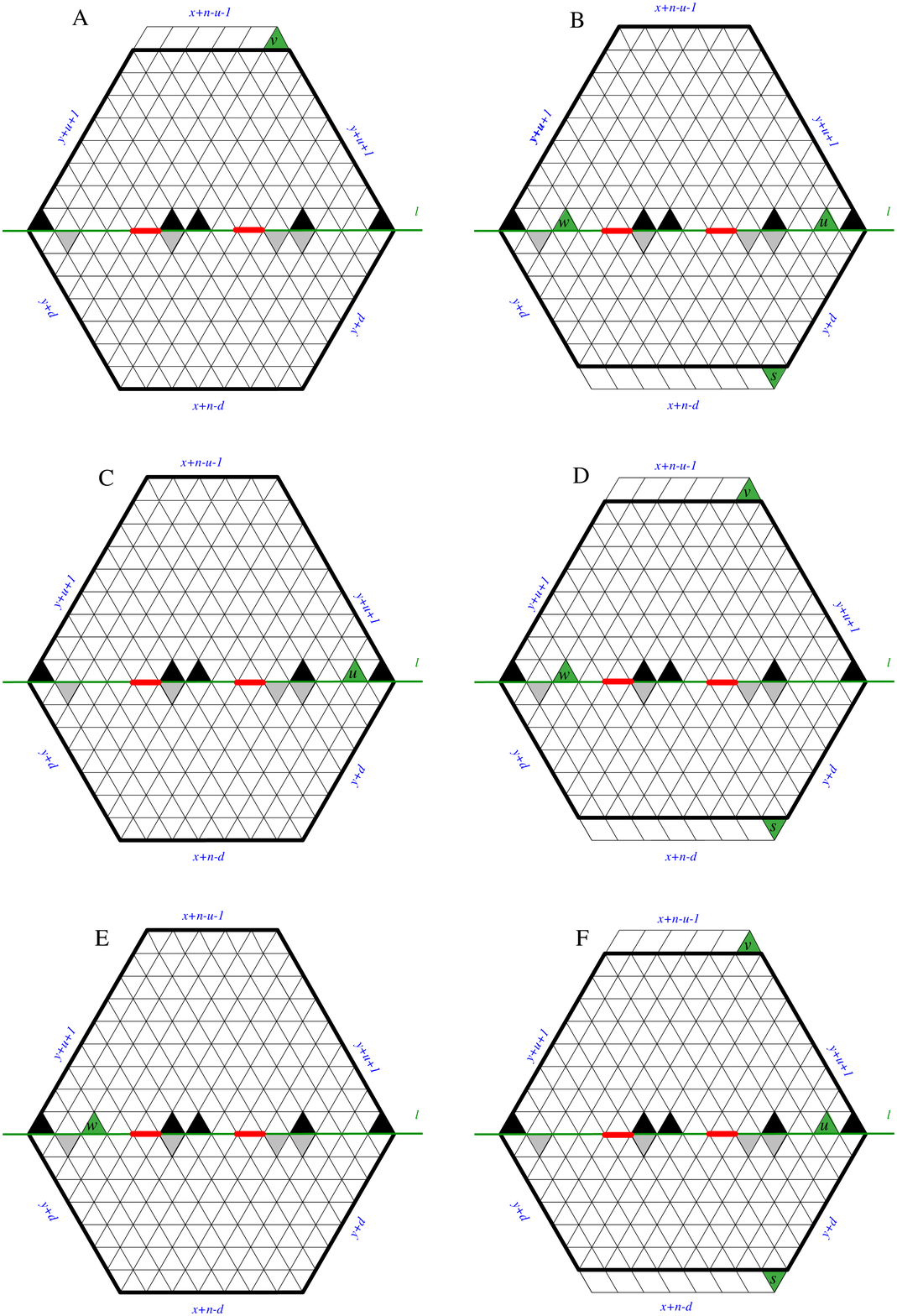}
\caption{Obtaining the recurrence for the tiling numbers.}\label{hexkuo2}
\end{figure}

We consider the region corresponding to $G-v$ (i.e., the region obtained from $R$ by removing the $v$-triangle as in Fig.  \ref{hexkuo2} A). The removal of the $v$-triangle yields several forced lozenges along the top of the region. After removing these forced lozenges (this lozenge removal changes the tiling number of the region by a factor $q^{(y+u+1)(x+n-u-1)}$), we get back the region $H_{x,y}(U;D;B)$. This means that we get
\begin{equation}\label{kuoeqa}
\M(G-v)=q^{(y+u+1)(x+n-u-1)}\M_q(H_{x,y}(U; D;B)).
\end{equation}
 By considering forced lozenges in the regions corresponding  to the graphs $G-\{u,w,s\},$  $G-u,$  $G-\{v,w,s\},$  $G-w,$ and  $G-\{u,v,s\}$ (as shown in Fig. \ref{hexkuo2} B -- F, respectively), we get
\begin{equation}\label{kuoeqb}
\M(G-\{u,w,s\})=\M_q(H_{x-1,y-1}(\alpha \beta U; D;B)),
\end{equation}
\begin{equation}\label{kuoeqc}
\M(G-u)=\M_q(H_{x-1,y}(\beta U;D;B)),
\end{equation}
\begin{equation}\label{kuoeqd}
\M(G-\{v,w,s\})=q^{(y+u+1)(x+n-u-1)}\M_q(H_{x,y-1}(\alpha U;D;B)),
\end{equation}
\begin{equation}\label{kuoeqe}
\M(G-w)=\M_q(H_{x-1,y}(\alpha U;D;B)),
\end{equation}
\begin{equation}\label{kuoeqf}
\M(G-\{u,v,s\})=q^{(y+u+1)(x+n-u-1)}\M_q(H_{x,y-1}(\beta U;D;B)),
\end{equation}
where we use the notations $\alpha U$, $\beta U$ and $\alpha \beta U$ for the unions $U\cup\{\alpha \}$, $U\cup\{\beta \}$ and $U\cup\{\alpha,\beta\}$, respectively.
Plugging Eqs. \ref{kuoeqa}--\ref{kuoeqf} into the equation in Kuo's Lemma \ref{kuothm}, we get the recurrence (all powers of $q$ cancel out):
\begin{align}\label{recurrence1}
\M_q(H_{x,y}(U;D;B))\M_q(H_{x-1,y-1}(\alpha \beta U; D;B))=&\M_q(H_{x-1,y}(\beta U;D;B))\M_q(H_{x,y-1}(\alpha U;D;B))\notag\\
&+\M_q(H_{x-1,y}(\alpha U;D;B))\M_q(H_{x,y-1}(\beta U;D;B)).
\end{align}



\bigskip

Denote by $h_{x,y}(U;D;U';D'):=g_{x,y}(U;D;U';D')\M(H_{x,y}(U';D';B))$ the expression on the right-hand side of (\ref{refineeq}). 
To finish the proof, we shall show that the $h_{x,y}(U;D;U';D')$ also satisfies recurrence (\ref{recurrence1}). Equivalently, we need to verify
\begin{align}\label{hrecur}
\frac{h_{x-1,y}(\beta U;D; \beta U';D')h_{x,y-1}(\alpha U;D; \alpha U';D'))}{h_{x,y}(U;D;U';D')h_{x-1,y-1}(\alpha\beta U; D; \alpha\beta U'; D')}+\frac{h_{x-1,y}(\alpha U;D;\alpha U';D')h_{x,y-1}(\beta U;D;\beta U';D'))}{h_{x,y}(U;D;U';D')h_{x-1,y-1}(\alpha\beta U; D;\alpha\beta U'; D')}=1.
\end{align}
We claim
\begin{clm}
\begin{equation}\label{ratioeq1}
\frac{g_{x-1,y}(\beta U;D; \beta U';D')g_{x,y-1}(\alpha U;D; \alpha U';D'))}{g_{x,y}(U;D;U';D')g_{x-1,y-1}(\alpha\beta U; D; \alpha\beta U'; D')}=1
\end{equation}
and
\begin{equation}\label{ratioeq2}
\frac{g_{x-1,y}(\alpha U;D;\alpha U';D')g_{x,y-1}(\beta U;D;\beta U';D'))}{g_{x,y}(U;D;U';D')g_{x-1,y-1}(\alpha\beta U; D;\alpha\beta U'; D')}=1.
\end{equation}
\end{clm}
\begin{proof}
We have from the definition
\begin{align}\label{PPeq}
\frac{\PP_q(u,d,y)}{\PP_q(u',d',y)}=q^{-udy+u'd'y}\frac{\Delta_q([u])\Delta_q([d])}{\Delta_q([u'])\Delta_q([d'])}\frac{\Delta_q([y+u'])\Delta_q([y+d'])}{\Delta_q([y+u])\Delta_q([y+d])},
\end{align}
where, for any ordered index set $S=\{s_1,s_2,\dots,s_k\}$, we define $\Delta_q(S):=\prod_{1\leq i < j\leq k} q^{s_j}-q^{s_i}$.
Therefore, we can rewrite the $g$-function as:
\begin{align}
g_{x,y}(U;D;U';D')=q^{(d-x-n)\binom{y+d+1}{2}-(d'-x-n)\binom{y+d'+1}{2}}\frac{\Delta_q(U)\Delta_q(D)}{\Delta_q(U')\Delta_q(D')}\frac{\Delta_q([y+u'])\Delta_q([y+d'])}{\Delta_q([y+u])\Delta_q([y+d])}.
\end{align}
It is easy to see that terms correspond to the faction  $\frac{\Delta_q([y+u'])\Delta_q([y+d'])}{\Delta_q([y+u])\Delta_q([y+d])}$ cancel out in  (\ref{ratioeq1}). The exponents of $q$ also cancel out easily. 

Moreover, since the position sets of down-pointing triangles do not change, the corresponding $\Delta_q(D)$  and $\Delta_q(D')$ terms canceled out. This makes  Eq. (\ref{ratioeq1}) become
\begin{equation}
\frac{\Delta_q(\beta U)}{\Delta_q(\beta U')}\frac{\Delta_q(\alpha U)}{\Delta_q(\alpha U')} = \frac{\Delta_q(U)}{\Delta_q(U')}\frac{\Delta_q(\alpha\beta U)}{\Delta_q(\alpha\beta U')}.
\end{equation}
Dividing two sides of the above equation by $\frac{\Delta_q(U)^2}{\Delta_q(U')^2}$, we get both equal to:
\begin{align}
\frac{\prod_{i=1}^{u}|\beta-s_i|_q}{\prod_{i=1}^{u'}|\beta-s'_i|_q}\frac{\prod_{i=1}^{u}|\alpha-s_i|_q}{\prod_{i=1}^{u'}|\alpha-s'_i|_q},
\end{align}
where the \emph{`$q$-absolute value'} $|x-y|_q$ is define to be $q^{x}-q^{y}$ if $x\geq y$, and is $q^{y}-q^{x}$ if $y>x$.
This implies (\ref{ratioeq1}). The identity (\ref{ratioeq2}) follows from (\ref{ratioeq1}) by interchanging the roles of $\alpha$ and $\beta$.
\end{proof}

By (\ref{ratioeq1}) and (\ref{ratioeq2}) in the above claim, we have (\ref{hrecur}) simplified as
\begin{align}
\frac{\M_q(H_{x-1,y}(\beta U';D';B))\M_q(H_{x,y-1}(\alpha U';D';B))}{\M_q(H_{x,y}(U';D';B))\M_q(H_{x-1,y-1}(\alpha \beta U'; D';B))}+\frac{\M_q(H_{x-1,y}(\alpha U';D;B'))\M_q(H_{x,y-1}(\beta U';D';B))}{\M_q(H_{x,y}(U';D';B))\M_q(H_{x-1,y-1}(\alpha \beta U'; D';B))}=1,
\end{align}
which is obtained from the application of recurrence (\ref{recurrence1}) to the region $H_{x,y}(U';D';B)$. We just verified that the function $h_{x,y}(U;D;U';D')$ (i.e. the expression on the right-hand side of (\ref{refineeq})) satisfies  (\ref{recurrence1}). This finishes the proof.
\end{proof}

\section{Generalizations, more applications, and future directions}

(1). Recall that when $U\cap D=\emptyset$ and $B=\emptyset$, our region (written in `cluster form') $H_{x,y}(U; D)=H_{x,y}(U; D; \emptyset)=H_{x,y}(C_1,\dotsc,C_k; d_1,\dots,d_{k-1})$ is tiling-equinumerous with a hexagon with ferns removed $E_{x,y}(F_1,\dotsc,F_k;d_1,\dots,d_{k-1})$.  Viewed in this way, our Theorem \ref{genfactorization} implies a number of known enumerations about regions with ferns removed. The general idea is that we choose suitable position sets $U,D,U',D'$ so that the region $H_{x,y}(U; D)$ in Eq. (\ref{genmaineq}) is the one that we want enumerate, and the region $H_{x,y}(U';D')$ is a known region up to removal of forced lozenges. Let us first consider the implication to  \cite[Theorem 2.12]{HoleDent} as follows. Suppose we specialize our region as follows:
\begin{enumerate}[nolistsep]
\item $k=3$,
\item $C_1$ and $C_3$ contain the same number of unit triangles,
\item $|d_1-d_2|\leq 1$ (i.e. $C_2$ is centered between $C_1$ and $C_3$, or as close to centered as possible)
\item all unit triangles in $C'_1$ have the same orientation,
\item all unit triangles in $C'_3$ have opposite orientation to those in $C'_1$, and
\item all unit triangles in $C'_2$ are up-pointing.
\end{enumerate}
In this case, the region $H_{x,y}(U; D)$ becomes a hexagon with three ferns removed and $H_{x,y}(U';D')$, after the removal of forced lozenges, becomes a \emph{cored hexagon} as in \cite{CEKZ} (see the illustration in Figure \ref{forced}). Therefore, our Theorem \ref{genfactorization} implies the enumeration of tilings of hexagons with three ferns removed in \cite[Theorem 2.12]{HoleDent}.

\medskip

By specifying that $C_1=C'_1$ and $C_3=C'_3$, in addition to the six conditions above, we recover Ciucu's Theorem 2.1 in \cite{Ciu1} about $F$-cored hexagons (after removing forced lozenges, the region $H_{x,y}(U;D)$ becomes an $F$-cored hexagon and $H_{x,y}(U';D')$, as in the previous case, becomes a cored hexagon). 

\medskip

Similarly, if we require
\begin{enumerate}[nolistsep]
\item $k=2$,
\item all unit triangles in $C'_1$ have the same orientation, and
\item all unit triangles in $C'_2$ have opposite orientation to those in $C'_1$, 
\end{enumerate}
then we get Theorem 1.1 in \cite{Twofern} about hexagons with two ferns removed, called `\emph{doubly--intruded hexagons}' (the region $H_{x,y}(U;D)$ is now a doubly--intruded hexagon and $H_{x,y}(U';D')$ becomes a hexagon in MacMahon's theorem, after removing forced lozenges).

\begin{figure}\centering
\includegraphics[width=12cm]{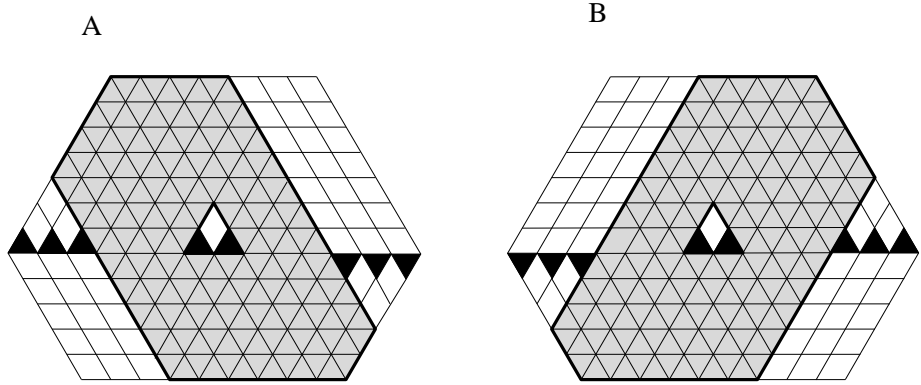}
\caption{Obtaining cored hexagons from hexagons with three ferns removed.}\label{forced}
\end{figure}

\medskip

(2). By equation (7.105) of \cite{StanleyB}, we have\footnote{The notation $\textbf{1}^n$ in the argument of a Schur function stands for $n$ arguments equal to 1.}
\begin{equation}
\prod_{1\leq i< j \leq u}\frac{s_j-s_i}{j-i}=s_{\lambda(\{s_1,\dots,s_u\})}(\textbf{1}^{u}),
\end{equation}
where the partition $\lambda(\{s_1,\dots,s_u\}):=(s_u-u+1,\dotsc,s_2-1,s_1)$. On the other hand, it is not hard to see that we also have
\begin{align}
\M\left(H_{x,y}(U;D)\right)
=\sum_{|S|=y}s_{\lambda(U\cup S)}(\textbf{1}^{u+y})s_{\lambda(D \cup S)}(\textbf{1}^{d+y}),
\end{align}
where the sum runs over all $y$-subsets $S$ of $ [x+y+n]-(U\cup D)$. Indeed, this follows from the fact that in each tiling, precisely $y$ of the $x+y$ unit segments on the lattice line from along which we removed the $n$ unit triangles are straddled by vertical lozenges. Hence one can write Eq. \ref{maineq1} in Theorem \ref{factorization} as
\begin{equation}\label{schureq}
\frac{  \sum_{|S|=y}\textbf{s}_{\lambda(U\cup S)}(\textbf{1}^{u+y})\textbf{s}_{\lambda(D \cup S)}(\textbf{1}^{d+y})}{  \sum_{|S|=y}\textbf{s}_{\lambda(U'\cup S)}(\textbf{1}^{u+y})\textbf{s}_{\lambda(D' \cup S)}(\textbf{1}^{d+y})}= \frac{\textbf{s}_{\lambda(U)}(\textbf{1}^{u})\textbf{s}_{\lambda(D)}(\textbf{1}^{d})}{\textbf{s}_{\lambda(U')}(\textbf{1}^{u})\textbf{s}_{\lambda(D')}(\textbf{1}^{d})},
  \end{equation}
where the sum is taken over all $y$-subsets $S$ of $ [x+y+n]-(U\cup D)$.

We note that $q$-Shuffling Theorem \ref{qfactorization} gives further supporting evidence for the existence of a Schur function identity behind Shuffling Theorem \ref{factorization}, as we have
 \[\prod_{1\leq i <j\leq u}\frac{1-q^{s_j-s_i}}{1-q^{j-i}}=q^{A}\, \textbf{s}_{\lambda(\{s_1,\dotsc,s_u\})}(q,q^2,q^3,\dots),\]
  for some constant $A$. This would imply that Eq. \ref{schureq} is still true (up to a $q$-power) when $\textbf{1}^{n}$ is replaced by the sequence $(q,q^2,q^3,\dots,q^{n})$.

It would be interesting to know if the following general sum has a similar simplification:
\begin{equation}\label{schureq2}
\frac{  \sum_{|S|=y}\textbf{s}_{\lambda(U\cup S)}(\textbf{X}^{m+y})\textbf{s}_{\lambda(D \cup S)}(\textbf{X}^{m+(d-u)+y})}{  \sum_{|S|=y}\textbf{s}_{\lambda(U'\cup S)}(\textbf{X}^{m+y})\textbf{s}_{\lambda(D' \cup S)}(\textbf{X}^{m+(d-u)+y})},
   \end{equation}
where $\textbf{X}^{m}$ denotes the sequence of variables $x_1,x_2,\dotsc,x_m$.

(3). Motivated by Stanley's classical paper \cite{Stanley} on symmetric plane partitions, we would like to investigate symmetric tilings of  $H_{x,y}(U;D)$. There are two natural classes of symmetric tilings: the tilings which are invariant under a reflection over a vertical axis, and those which are invariant under a $180^{\circ}$ rotation (these tilings correspond to the transposed-complementary and self-complementary plane partitions). The shuffling theorems for these symmetry classes will be investigated in separate papers \cite{shuffling2, shuffling3}.

\section*{Acknowledgement}
The authors would like to thank Brendon Rhoades for fruitful discussions. The first author (T.L.) was supported in part by Simons Foundation Collaboration Grant \#-585923.

\end{document}